\newtheorem{thm}{Theorem}[section]
\newtheorem{cor}[thm]{Corollary}
\newtheorem{lem}[thm]{Lemma}
\newtheorem{prop}[thm]{Proposition}
\newtheorem*{thm*}{Theorem}
\newtheorem*{cor*}{Corollary}
\theoremstyle{definition}
\newtheorem{defn}[thm]{Definition}
\newtheorem{rem}[thm]{Remark}
\newtheorem{ques}[thm]{Question}
\newtheorem*{conj*}{Conjecture}
\newtheorem{exam}[thm]{Example}
\newtheorem*{claim*}{Claim}
\newtheorem*{ques*}{Question}
\newtheorem{chunk}[thm]{\hspace*{-1.065ex}\bf}
\theoremstyle{remark}
\newtheorem*{ac}{Acknowledgments}
\numberwithin{equation}{thm}
\def\A{\mathcal{A}}
\def\aa{\mathfrak{a}}
\def\ann{\operatorname{Ann}}
\def\CM{\operatorname{CM}}
\def\Cone{\operatorname{Cone}}
\def\D{\mathcal{D}}
\def\depth{\operatorname{depth}}
\def\E{\operatorname{E}}
\def\Ext{\operatorname{Ext}}
\def\ge{\geqslant}
\def\H{\operatorname{H}}
\def\hinf{\operatorname{hinf}}
\def\Hom{\operatorname{Hom}}
\def\hsup{\operatorname{hsup}}
\def\id{\mathrm{id}}
\def\Id{\mathrm{Id}}
\def\Im{\operatorname{Im}}
\def\K{\operatorname{K}}
\def\Ker{\operatorname{Ker}}
\def\le{\leqslant}
\def\len{\operatorname{length}}
\def\m{\mathfrak{m}}
\def\p{\mathfrak{p}}
\def\pd{\operatorname{pd}}
\def\qid{\operatorname{qid}}
\def\qpd{\operatorname{qpd}}
\def\rhom{\operatorname{RHom}}
\def\spec{\operatorname{Spec}}
\def\Tor{\operatorname{Tor}}
\def\xx{\boldsymbol{x}}
\def\Z{\mathbb{Z}}
\begin{document}
\title{Quasi-injective dimension}
\author{Mohsen Gheibi}
\address[M. Gheibi]{Department of Mathematics, Florida A{\&}M University, Tallahassee FL, USA }
\email{mohsen.gheibi@famu.edu}
\subjclass[2020]{13D05, 13D07, 13H10}
\keywords{quasi-injective dimension, quasi-projective dimension, Cohen-Macaulay ring, Gorenstein ring, Theorem of Bass, Theorem of Foxby}
\begin{abstract}
Following our previous work about quasi-projective dimension \cite{GJT}, in this paper, we introduce quasi-injective dimension as a generalization of injective dimension. We recover several well-known results about injective and Gorenstein-injective dimensions in the context of quasi-injective dimension such as the following. (a) If the quasi-injective dimension of a finitely generated module $M$ over a local ring $R$ is finite, then it is equal to the depth of $R$. (b) If there exists a finitely generated module of finite quasi-injective dimension and maximal Krull dimension, then $R$ is Cohen-Macaulay. (c) If there exists a nonzero finitely generated module with finite projective dimension and finite quasi-injective dimension, then $R$ is Gorenstein. (d) Over a Gorenstein local ring, the quasi-injective dimension of a finitely generated module is finite if and only if its quasi-projective dimension is finite. 
\end{abstract}
\maketitle
\section{Introduction}

In our previous work together with Jorgensen and Takahashi \cite{GJT}, we introduced and studied the quasi-projective dimension as a generalization of the projective dimension. Building upon this work, our current paper delves into the study of the quasi-injective dimension. Our motivation for defining these homological dimensions stems from the following phenomenon. 

Let $Q$ be a commutative Noetherian ring and let $f_1,\dots,f_n$ be a regular sequence in $Q$. Set $R=Q/(f_1,\dots,f_n)$ and let $M$ be an $R$-module, and $\dots \to F_1 \to F_0 \to 0$ be a projective  resolution of $M$ over $Q$. Then $F\otimes_QR$  is a complex of projective  $R$-modules with a special property that all nonzero homologies of $F\otimes_QR$  are isomorphic to finite direct sums of copies of $M$. We call a complex of projective  $R$-modules with such property a {\em quasi-projective resolution} of $M$. We say $M$ has finite {\em quasi-projective}  {\em dimension} over $R$ if there exists a bounded quasi-projective  resolution of $M$. In the analogous way, we define {\em quasi-injective resolution} and {\em quasi-injective dimension}; see Definition \ref{12}. These homological dimensions have been studied from a different perspective in \cite{Av2}. Also in \cite{DGI}, modules possessing finite quasi-projective dimension have been investigated as {\em virtually small objects} in the derived category of $R$; see also \cite{BGP} and \cite{P}.

Although quasi-resolutions are not always acyclic, they are still useful. Specifically, quasi-dimensions provide insights into both the modules themselves and the underlying ring.
In \cite{GJT}, we recovered the Auslander-Buchsbaum Formula  for finitely generated modules of finite quasi-projective dimension over a commutative Noetherian local ring $R$; see \cite[Theorems 4.4]{GJT}. On the other hand, if injective dimension of a nonzero finitely generated module $M$ is finite, then it is equal to the depth of $R$. This is known as the {\em Bass Formula}. In light of this, our first main result  is the following.

\begin{thm*}[A]
Let $R$ be a commutative Noetherian local ring and let $M$ be a nonzero finitely generated $R$-module of finite quasi-injective dimension. Then one has $\dim_RM\leq \qid_RM=\depth R$.
\end{thm*}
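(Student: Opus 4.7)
The plan is to adapt the classical proof of Bass's formula $\id_R M = \depth R$ to the quasi-injective setting, leveraging the techniques developed in \cite{GJT} for quasi-projective dimension. Fix a minimal bounded quasi-injective resolution $I^\bullet$ of $M$ concentrated in cohomological degrees $[0,n]$, where $n = \qid_R M$, with $H^q(I^\bullet) \cong M^{b_q}$ and $b_0 \geq 1$ (after possibly truncating).

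To prove $\depth R \leq \qid_R M$, I would study the auxiliary complex $C^\bullet = \Hom_R(k, I^\bullet)$. Decomposing each $I^i = \bigoplus_\p \E(R/\p)^{\nu^i_\p}$ into indecomposable injectives, and using $\Hom_R(k, \E(R/\p)) = 0$ for $\p \neq \m$, one sees that $C^\bullet$ is a bounded complex of finite-dimensional $k$-vector spaces $k^{\nu^i_\m}$, with zero differentials by the minimality of $I^\bullet$. The Cartan--Eilenberg hyperext spectral sequence
\[
E_2^{p,q} = \Ext^p_R(k, H^q(I^\bullet)) \cong \Ext^p_R(k, M)^{b_q} \Longrightarrow H^{p+q}(C^\bullet)
\]
then feeds $\bigoplus_{p+q=i} \Ext^p_R(k, M)^{b_q}$ into each $H^i(C^\bullet) = k^{\nu^i_\m}$. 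The idea is to show, by tracking Bass numbers through $I^\bullet$ in analogy with the classical argument that the top Bass number $\mu^{\id M}(\m, M)$ is nonzero, that $\nu^n_\m \neq 0$, and then that the position of this top $\E(k)$-summand is bounded below by $\depth R$ via a Koszul-complex computation on a maximal $R$-regular sequence.

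For the upper bound $\qid_R M \leq \depth R$, I plan to induct on $\depth R$. In the base case $\depth R = 0$, so that $\m \in \operatorname{Ass} R$, the structure of a minimal quasi-injective resolution should force $n = 0$: if $n > 0$, analyzing the associated primes of the top module $I^n$ yields a contradiction with $\m \in \operatorname{Ass} R$. For the inductive step, pick an $R$-regular element $x \in \m$ and construct from $I^\bullet$ a quasi-injective resolution over $R/(x)$ of length $n-1$ of an appropriate related module, mirroring the change-of-rings reductions carried out in \cite{GJT}.

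For the remaining inequality $\dim_R M \leq \qid_R M$, I would pick $\p \in \Supp_R M$ with $\dim R/\p = \dim_R M$, invoke Grothendieck non-vanishing to obtain a nonzero class in $\Ext^{\dim R/\p}_R(R/\p, M)$, and track it through an analogous spectral sequence (with $R/\p$ in place of $k$) to force $\dim R/\p \leq n$. The main obstacle throughout is the lower bound in step one: the $E_2$-columns $\Ext^p_R(k, M)^{b_q}$ can extend arbitrarily in $p$ when $\id_R M = \infty$, so one must exploit both the minimality of $I^\bullet$ and the direct-sum structure $H^q(I^\bullet) \cong M^{b_q}$ in order to guarantee that a specific Bass-number contribution at total degree $\depth R$ survives all differentials of the spectral sequence.
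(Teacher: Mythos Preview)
Your plan has genuine gaps in all three parts, and it misses the key simplification the paper exploits.

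\textbf{The missing idea.} Write $s=\hinf I$ and let $Z_s=\Ker\partial_s$. Then the truncation $0\to Z_s\to I_s\to I_{s-1}\to\cdots\to I_{\inf I}\to 0$ is an honest injective resolution of $Z_s$, so $\id_R Z_s=\qid_R M$. The paper works with $Z_s$, not with $M$: once you have a single module of finite injective dimension equal to $t=\qid_R M$, the classical Bass argument applies almost verbatim. One shows $\Ext^t_R(R/\p,Z_s)=0$ for $\p\neq\m$ (this needs a lemma that these Ext groups are finitely generated even though $Z_s$ is not), hence $\Ext^t_R(k,Z_s)\neq 0$; then an embedding $k\hookrightarrow R/(\xx)$ with $\xx$ a maximal $R$-sequence gives $t\le d$. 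For $t\ge d$ one uses the surjection $Z_s\twoheadrightarrow\H_s(I)\cong M^{b_s}$ and $\pd_R R/(\xx)=d$. Your spectral-sequence attack tries to read Bass numbers of $M$ directly, which is exactly where the infinite $E_2$-columns bite; reducing to $Z_s$ makes the problem finite.

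\textbf{Specific problems in your outline.} (i) You invoke a \emph{minimal} quasi-injective resolution with $\Hom_R(k,I^\bullet)$ having zero differentials; no such notion is developed, and it is not clear one exists with that property. (ii) Your induction on $\depth R$ for the upper bound chooses $x$ regular on $R$, but $x$ need not be regular on $M$ (e.g.\ $\depth_R M=0<\depth R$), so Proposition~2.6(2) does not apply; and even when it does, only $\qid_{R/(x)}M/xM\le\qid_R M$ is known, not a strict drop. (iii) For $\dim_R M\le\qid_R M$ you cite Grothendieck non-vanishing for $\Ext^{\dim R/\p}_R(R/\p,M)$, but that is not what non-vanishing says; the paper instead proves $\H^i_\m(M)=0$ for $i>\qid_R M$ by chasing the short exact sequences $0\to Z_i\to I_i\to B_i\to 0$ and $0\to B_{i+1}\to Z_i\to\H_i(I)\to 0$, and then applies the genuine Grothendieck non-vanishing $\H^{\dim M}_\m(M)\neq 0$.
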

Bass \cite{B} conjectured that the existence of a nonzero finitely generated module $M$ of finite injective dimension forces $R$ to be Cohen-Macaualy which was proven later by Roberts \cite{Rob} as a consequence of Peskine and Szpiro's Intersection Theorem. A corollary of Theorem (A) is the following result that also can be compared to \cite[Theorems 3.4 and 3.5]{T}, \cite[Theorem 1.3]{Y}, and \cite[Corollary 7.6]{ZCGS}.
\begin{cor*}
Let $(R,\m)$ be a Noetherian local ring. If there exists a nonzero finitely generated module $M$ with maximal Krull dimension and finite quasi-injective dimension, then $R$ is Cohen-Macaulay. In particular, if $\qid_R\m<\infty$, then $R$ is Cohen-Macaulay.
\end{cor*}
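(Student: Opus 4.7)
The corollary should follow almost immediately from Theorem~(A), so the plan is to identify the pieces and then just chase inequalities. First I would invoke Theorem~(A) applied to $M$: since $M$ is a nonzero finitely generated $R$-module with $\qid_R M < \infty$, the theorem gives
\[
\dim_R M \le \qid_R M = \depth R.
\]
By the hypothesis of maximal Krull dimension, $\dim_R M = \dim R$, so the chain above reads $\dim R \le \depth R$. Combining with the standard inequality $\depth R \le \dim R$ yields $\depth R = \dim R$, which is exactly the definition of $R$ being Cohen-Macaulay.

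For the ``in particular'' part, I would reduce to the first part by checking that $M=\m$ satisfies the hypotheses whenever $R$ is not already trivially Cohen-Macaulay. If $R$ is a field then $\m = 0$ and $R$ is Cohen-Macaulay by convention, so assume $\m \ne 0$. Then $\m$ is a nonzero finitely generated $R$-module, and a short support calculation shows $\dim_R \m = \dim R$: for any prime $\mathfrak{p}\subsetneq \m$ one can pick $s \in \m \setminus \mathfrak{p}$, and then $s/1$ is a unit in $R_\mathfrak{p}$ lying in $\m_\mathfrak{p}$, so $\m_\mathfrak{p} = R_\mathfrak{p} \ne 0$; together with $\m_\m = \m \ne 0$ this gives $\Supp_R \m = \spec R$. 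Applying the first part to $M = \m$ completes the argument.

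The proof has no genuine obstacle provided Theorem~(A) is already established; the only point requiring care is the verification that $\dim_R \m = \dim R$ in the second statement, which is the one place where the trivial Artinian case (or the case $\m=0$) must be separated out. Everything else is a direct substitution.
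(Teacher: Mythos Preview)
Your proof is correct and matches the paper's approach: the paper states this corollary as an immediate consequence of Theorems~\ref{Bass} and~\ref{dim} (whose combination is exactly Theorem~(A)), and your inequality chase $\dim R=\dim_R M\le\depth R\le\dim R$ is precisely the intended argument. Your verification that $\dim_R\m=\dim R$ via the support computation (with the field case handled separately) supplies the detail the paper leaves implicit for the ``in particular'' clause.
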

In contrast to Bass's theorem, an arbitrary local ring $R$ also admits a finitely generated module of finite quasi-injective dimension. A straightforward example is the residue field of a maximal ideal of $R$ that has both finite quasi-projective and quasi-injective dimensions.

In light of the aforementioned corollary, a natural question arises: what happens if the quasi-injective dimension of $R$ is finite when considered as a module over itself?  This question is of broader interest, and in section 4, we prove the following result that addresses it and generalizes a theorem of Foxby \cite{Fox}.

\begin{thm*}[B]
Let $R$ be a commutative Noetherian local ring and let $M$ be a nonzero finitely generated $R$-module. If $\pd_RM$ and $\qid_RM$ both are finite, then $R$ is Gorenstein.
\end{thm*}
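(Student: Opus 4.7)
The plan is to show that $R$ is Gorenstein by proving that its Bass series $B_R(t)=\sum_n\mu^n(R)\,t^n$ (with $\mu^n(R)=\dim_k\Ext_R^n(k,R)$) is a polynomial, equivalently $\id_R R<\infty$. I fix a bounded quasi-injective resolution $I^\bullet$ of $M$, so $H^q(I^\bullet)\cong M^{a_q}$ and $I^\bullet\not\simeq 0$ in $D^b(R)$ since $M\ne 0$. The key structural step combines both hypotheses: since $\pd_R M<\infty$, the module $M$ is quasi-isomorphic to a bounded complex of finitely generated free modules, so each cohomology $M^{a_q}$ of $I^\bullet$ is perfect in $D^b(R)$. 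Because perfect complexes form a thick subcategory and $I^\bullet$ has only finitely many nonzero cohomologies, a Postnikov-tower argument shows that $I^\bullet$ itself is perfect, so it admits a \emph{minimal} representative $P^\bullet$: a bounded complex of finitely generated free $R$-modules with all differentials in $\m$, quasi-isomorphic to $I^\bullet$. Set $P(t)=\sum_p\rank(P^p)\,t^p$; this is a nonzero Laurent polynomial with nonnegative integer coefficients.

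Next, I would compute $\rhom_R(k,I^\bullet)$ in two different ways. Because $I^\bullet$ is a bounded complex of injectives it is K-injective, so $\rhom_R(k,I^\bullet)\simeq\Hom_R(k,I^\bullet)$ is a bounded complex of $k$-vector spaces, and its total dimension series
\[
B_{I^\bullet}(t)\;:=\;\sum_n\dim_k H^n(\Hom_R(k,I^\bullet))\,t^n
\]
is a polynomial. On the other hand, the natural evaluation map $X\otimes^{\mathrm{L}}_R\rhom_R(k,R)\to\rhom_R(k,X)$ is an isomorphism whenever $X$ is perfect (true for $X=R$ and both sides are exact triangulated functors of $X$, so the thick-subcategory argument applies). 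Applied to $X=I^\bullet\simeq P^\bullet$ and filtering the double complex $P^\bullet\otimes_R\rhom_R(k,R)$ by $P^\bullet$-degree, one gets a spectral sequence with $E_1^{p,q}\cong k^{\rank(P^p)\cdot\mu^q(R)}$. Its $d_1$ differential is induced by the ($\m$-valued) differentials of the minimal complex $P^\bullet$ acting on the $\m$-annihilated $k$-vector space $\Ext_R^q(k,R)$, hence vanishes. The spectral sequence degenerates at $E_1$, and collecting dimensions gives the identity
\[
B_{I^\bullet}(t)\;=\;P(t)\,B_R(t).
\]

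A short formal power series argument then concludes the proof. Since $P(t)$ is a nonzero Laurent polynomial with nonnegative integer coefficients, there is some $p_0$ with $\rank(P^{p_0})>0$, and the coefficient of $t^n$ in $P(t)B_R(t)$ is at least $\rank(P^{p_0})\,\mu^{n-p_0}(R)$. If $\mu^n(R)>0$ for infinitely many $n$, then $P(t)B_R(t)$ has infinitely many positive coefficients, contradicting the fact that $B_{I^\bullet}(t)=P(t)B_R(t)$ is a polynomial. Therefore $\mu^n(R)=0$ for $n\gg 0$, i.e.\ $\id_R R<\infty$, and $R$ is Gorenstein.

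The main obstacle, in my view, is step (i), identifying $I^\bullet$ as a perfect complex, and step (ii), extracting the clean multiplicative identity $B_{I^\bullet}(t)=P(t)B_R(t)$. Both hypotheses on $M$ are used in essential, complementary ways: $\pd_R M<\infty$ gives perfectness of the cohomologies of $I^\bullet$ (and hence of $I^\bullet$ itself), while $\qid_R M<\infty$ is what bounds $B_{I^\bullet}(t)$; and minimality of $P^\bullet$ over the local ring is precisely what forces the spectral sequence to collapse to an identity rather than a mere inequality.
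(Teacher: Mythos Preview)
Your argument is correct and takes a genuinely different route from the paper. The paper does not stay in the derived category: from a bounded quasi-injective resolution $I$ it extracts the single module $Z_s=\Ker\partial_s$ at $s=\hinf I$, notes that $\id_R Z_s<\infty$ and that $Z_s$ surjects onto $\H_s(I)\cong M^{\oplus b_s}$, and then invokes a separate module-theoretic result (its Theorem~\ref{Foxby}) saying that any surjection from a module of finite injective dimension onto a nonzero finitely generated module of finite projective dimension forces $R$ to be Gorenstein. That theorem is proved by a Bass-number computation using the Avramov--Veliche isomorphisms for $\Ext_R^*(k,-)$ of modules of finite projective dimension. Your proof bypasses this intermediate statement entirely: you recognize that $\pd_RM<\infty$ makes $I^\bullet$ perfect, pass to a minimal free model $P^\bullet$, and read off the identity $B_{I^\bullet}(t)=P(t)\,B_R(t)$ in one stroke. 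What the paper's approach buys is the extra theorem about surjections $N\twoheadrightarrow M$ with $\id_RN<\infty$, which is of independent interest; what your approach buys is a cleaner, more structural explanation of why both hypotheses interact, and it avoids the explicit syzygy bookkeeping.

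One small point to tighten: you assert that the spectral sequence degenerates at $E_1$ after checking only that $d_1=0$, but higher differentials need to vanish as well. The quickest fix is to note that $\rhom_R(k,R)$, being a complex of $k$-vector spaces, is isomorphic in $D(R)$ to its cohomology $\bigoplus_q k^{\mu^q(R)}[-q]$ with zero differential; tensoring with the minimal $P^\bullet$ then yields a double complex with \emph{both} differentials zero (the $P$-direction by minimality, since the other factor is $\m$-annihilated), so the total complex has zero differential and the identity $\dim_k H^n=\sum_{p+q=n}\rank(P^p)\,\mu^q(R)$ is immediate. With that adjustment the proof is complete.
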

Theorem (B) is a corollary of a more general fact which is stated as Theorem \ref{Foxby}. 
The last section of this paper is devoted to showing that over a Gorenstein local ring $R$ the classes of finitely generated modules of finite quasi-projective  and finite quasi-injective dimensions coincide.

\begin{thm*}[C]
Let $R$ be a Gorenstein local ring and let $M$ be a finitely generated $R$-module. Then $\qpd_RM<\infty$ if and only if $\qid_RM<\infty$.
\end{thm*}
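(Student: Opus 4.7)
My plan exploits two characteristic features of a Gorenstein local ring $R$ of dimension $d$: the finite injective dimension $\id_R R = d$, and the self-duality of the bounded derived category $D^b_{fg}(R)$ of finitely generated modules under $\rhom_R(-, R)$ arising from $\omega_R \cong R$.

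For the implication $\qpd_R M < \infty \Rightarrow \qid_R M < \infty$, let $P_\bullet$ be a quasi-projective resolution of $M$, namely a bounded complex of finitely generated free $R$-modules each of whose homology modules is a finite direct sum of copies of $M$. Because $\id_R R = d$, every finitely generated free $R$-module admits a bounded injective resolution of length at most $d$. Taking a Cartan--Eilenberg injective resolution of $P_\bullet$ and totalizing yields a bounded complex $I^\bullet$ of injective $R$-modules quasi-isomorphic to $P_\bullet$. Since quasi-isomorphism preserves homology, each $\H^i(I^\bullet)$ is again a finite direct sum of copies of $M$, making $I^\bullet$ a quasi-injective resolution of $M$ and giving $\qid_R M < \infty$.

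For the converse $\qid_R M < \infty \Rightarrow \qpd_R M < \infty$, the symmetric strategy fails: injective $R$-modules over a Gorenstein local ring of positive dimension are seldom finitely generated, so resolving them term-by-term with free modules produces complexes of non-finitely-generated projectives, which do not meet the definition of a quasi-projective resolution. My plan is to apply the involution $\rhom_R(-, R)$ on $D^b_{fg}(R)$. Given a minimal quasi-injective resolution $I^\bullet$ of $M$, analyze $\rhom_R(I^\bullet, R)$ via the hyperext spectral sequence $E_2^{p,q} = \Ext_R^q(\H^{-p}(I^\bullet), R) \Rightarrow \H^{p+q}(\rhom_R(I^\bullet, R))$, using the identification $\H^{-p}(I^\bullet) \cong M^{a_p}$ together with the vanishing $\Ext_R^q(-, R) = 0$ for $q > d$. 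Combined with a second application of $\rhom_R(-, R)$ and the fact that $\rhom_R(-, R)$ sends bounded complexes of finitely generated projectives to bounded complexes of finitely generated projectives (since $\rhom_R(R, R) \simeq R$), this should produce a bounded complex of finitely generated free $R$-modules whose homologies are finite direct sums of copies of $M$.

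The principal obstacle is in the reverse direction: since $\rhom_R(M, R)$ is in general a complex rather than a module, dualizing a quasi-injective resolution of $M$ naturally produces information about $\rhom_R(M, R)$ rather than about $M$ itself. Verifying that after the round-trip through $\rhom_R(-, R)$ the final bounded complex really has homologies equal to finite direct sums of $M$, and that its terms can be taken finitely generated and free, is the technical heart of the proof and is where I expect most of the work to lie; it will likely require careful bookkeeping with Bass numbers in the minimal quasi-injective resolution together with the structure of $\Ext_R^*(E(R/\p), R)$ for the injective hulls $E(R/\p)$ appearing in that resolution.
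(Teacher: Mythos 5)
Your forward direction ($\qpd_RM<\infty\Rightarrow\qid_RM<\infty$) reaches the right conclusion, though the mechanism is slightly off: a Cartan--Eilenberg resolution of $P_\bullet$ does not literally totalize to a \emph{bounded} complex, since its rows resolve the homologies $M^{\oplus a_i}$, which may have infinite injective dimension; what you actually need is that a bounded complex of frees has finite injective dimension in the derived category (thick-subcategory/truncation argument, as the paper itself uses in Proposition \ref{C51}(2)), and then a bounded semi-injective representative has the same homology. The genuine gap is in the converse, precisely the step you defer as ``the technical heart.'' What must be shown is that a bounded complex $I$ of injectives with $\H_i(I)\cong M^{\oplus b_i}$ is quasi-isomorphic to a bounded complex of finitely generated free modules, i.e.\ that $I$ is perfect. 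Your stated worry --- that after the round trip through $\rhom_R(-,R)$ the homologies might no longer be sums of $M$ --- is not where the difficulty lies: biduality $\rhom_R(\rhom_R(I,R),R)\simeq I$ holds because $I$ has bounded, degreewise finitely generated homology and $R$ is a dualizing complex for itself, so any perfect representative automatically has homology $\H_i(I)\cong M^{\oplus b_i}$. The missing content is the perfectness itself, i.e.\ ``finite injective dimension implies finite projective dimension'' for objects of the bounded derived category with finitely generated homology over a Gorenstein ring (equivalently $\pd_R\rhom_R(I,R)<\infty$). This is true and standard, but nothing in your outline proves it, and the route you propose --- Bass-number bookkeeping in a ``minimal'' quasi-injective resolution together with $\Ext^*_R(\E(R/\p),R)$ --- is unlikely to work as stated: $I$ is not semi-projective and $R$ is not injective, so termwise duals $\Hom_R(I_j,R)$ do not compute $\rhom_R(I,R)$; one should instead form $\Hom_R(I,J)$ with $J$ a bounded injective resolution of $R$, obtaining a bounded complex of \emph{flat} modules, and then invoke a result converting finite flat dimension into a perfect representative, exactly as the paper does with \cite[Theorem 8.3.19]{CFH} in the proof of Theorem \ref{duality}.

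For comparison, the paper takes a different and more module-theoretic route: it first reduces to maximal Cohen--Macaulay modules using syzygies, Auslander--Buchweitz approximations \cite{ABU}, and the exact-sequence Proposition \ref{P52}, and in the MCM case applies the duality $\Hom_R(-,\omega_R)$ of Theorem \ref{duality} (where higher Ext against $\omega_R$ vanishes, so homologies stay modules) together with \cite[Proposition 6.14]{GJT} for totally reflexive modules. Your derived-category plan, once supplemented by the perfectness step above, would yield a shorter proof avoiding MCM approximations entirely; but as written the harder implication is not established.
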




\section{Definitions and basic properties}

Throughout, all rings are commutative Noetherian with identity.

\begin{chunk}\label{notations}
Let $\A$ be an abelian category.
Let $$C= (\cdots\xrightarrow{\partial_{i+2}}C_{i+1}\xrightarrow{\partial_{i+1}}C_i \xrightarrow{\partial_{i}} C_{i-1} \xrightarrow{\partial_{i-1}} \cdots)$$ be a complex of objects of $\A$. We define the {\em supremum}, {\em infimum}, {\em homological supremum} and {\em homological infimum} of $C$ as follows
$$
\begin{cases}
\sup C=\sup\{i\in\Z\mid C_i\ne0\},\\
\inf C=\inf\{i\in\Z\mid C_i\ne0\},
\end{cases}
\qquad
\begin{cases}
\hsup C=\sup\{i\in\Z\mid\H_i(C)\ne0\},\\
\hinf C=\inf\{i\in\Z\mid\H_i(C)\ne0\}.
\end{cases}
$$
The \emph{length} of $C$ is defined to be $\len C=\sup C-\inf C$.
We say that $C$ is \emph{bounded}, if $\len C<\infty$. We say that $C$ is \emph{bounded below} if $\inf C>-\infty$ and $C$ is {\em bounded above} if $\sup C<\infty$. 
Note that if $C$ satisfies $C_i=0$ for all 
$i\in\mathbb Z$, then $\sup C =-\infty$, $\inf C=\infty$, and by convention we set $\len C=-\infty$.
For an integer $j$, the complex $C[j]$ is defined by ${C[j]}_i = C_{i-j}$ and $\partial^{C[j]}_i=(-1)^j\partial^C_{i-j}$ for all $i$.
\end{chunk}

\begin{defn}\label{12}
Let $\A$ be an abelian category with enough projective and injective objects.
Let $M$ be an object of $\A$.
\begin{enumerate}[(1)]
\item
A {\em quasi-projective resolution} of $M$ in $\A$ is a bounded below complex $P$ of projective objects in $\A$ such that for all $i\ge\inf P$ there exist non-negative integers 
$a_i$, not all zero, such that $\H_i(P)\cong M^{\oplus a_i}$.
We define the {\em quasi-projective dimension} of $M$ in $\A$ by
$$
\qpd_\A M=
\inf\{\sup P-\hsup P\mid\text{$P$ is a bounded quasi-projective resolution of $M$}\},
$$ and $\qpd_RM=-\infty$ if $M=0$.
\item 
A {\em quasi-injective resolution} of $M$ in $\A$ is a bounded above complex $I$ of injective objects in $\A$ such that for all $i\leq \sup I$ there exist non-negative integers $b_i$, not all zero, such that $\H_i(I)\cong M^{\oplus b_i}$. We define the {\em quasi-injective dimension} of  $M$ in $\A$ by
$$
\qid_\A M=
\inf\{\hinf I-\inf I\mid\text{$I$ is a bounded quasi-injective resolution of $M$}\},
$$ and $\qid_\A M=-\infty$ if $M=0$.
\end{enumerate}
One has $\qpd_\A M=\infty$ or $\qid_\A M=\infty$ if and only if $M$ does not admit a bounded quasi-projective or quasi-injective resolutions, respectively.
\end{defn}

\begin{rem}\label{8}
Let $\A$ be an abelian category with enough injective objects.
Let $M$ be an object of $\A$.
\begin{enumerate}[(1)]
\item
Every injective resolution of $M$ is a  quasi-injective resolution of $M$.
\item
If injective dimension of $M$ is finite, then quasi-injective dimension of $M$ is finite.
Therefore there is an inequality $\qid_\A M\le\id_\A M$.
\item Let $M\ne0$, and $I$ be a quasi-injective resolution of $M$.
Then there exists a quasi-injective resolution $I'$ of $M$ with $\H_i(I')=\H_i(I)$ for all 
$i\in\Z$, and $\hsup I'=\sup I'$. Therefore by taking shifts, we  will assume $\sup I=\hsup I=0$
\end{enumerate}
\end{rem}

By modifying the proof of \cite[Proposition 3.3]{GJT}, we have the following proposition.

\begin{prop}\label{Syz}
Let $\A$ be an abelian category with enough injective objects.
\begin{enumerate}[\rm(1)]
\item
For an object $M\in\A$ and an integer $n>0$, one has $\qid_\A(M^{\oplus n})=\qid_\A M$.
\item
Let $M,N\in\A$.
Then $\qid_\A(M\oplus N)\le\sup\{\qid_\A M,\qid_\A N\}$.
\item
For a nonzero object $M\in\A$ and an exact sequence $0\to N\to J \to M \to 0$ with $J$ an injective object, there is an inequality $\qid_\A M\le\qid_\A N$.
\end{enumerate}
\end{prop}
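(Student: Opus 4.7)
The plan is to prove the three parts in the order (1), (3), (2), following the structure of the proof of the quasi-projective analogue \cite[Proposition 3.3]{GJT} under dualization. For (1), the argument is direct from the definition: given a quasi-injective resolution $I$ of $M$ with $\H_i(I)\cong M^{\oplus b_i}$, the complex $I^{\oplus n}$ is a bounded-above complex of injectives with $\sup I^{\oplus n}=\sup I$ and $\inf I^{\oplus n}=\inf I$, and with $\H_i(I^{\oplus n})\cong (M^{\oplus n})^{\oplus b_i}$, giving $\qid_\A(M^{\oplus n})\le\qid_\A M$. Conversely, any quasi-injective resolution $I'$ of $M^{\oplus n}$ has homology $\cong(M^{\oplus n})^{\oplus b_i}\cong M^{\oplus (nb_i)}$, hence also qualifies as a quasi-injective resolution of $M$, yielding the reverse inequality.

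For (3), assume $\qid_\A N<\infty$ and pick a quasi-injective resolution $I$ of $N$ realizing $\qid_\A N=\hinf I-\inf I$, normalized so that $\sup I=\hsup I=0$ by Remark \ref{8}(3). Write $\H_0(I)=N^{\oplus b_0}$, viewed as the subobject $\Ker\partial^I_0\subseteq I_0$ with $b_0>0$. Direct-summing $b_0$ copies of the given short exact sequence produces $0\to N^{\oplus b_0}\to J^{\oplus b_0}\to M^{\oplus b_0}\to 0$. The strategy is to splice $J^{\oplus b_0}$ at the top of $I$ (possibly enlarging $I_0$ by a summand isomorphic to $J^{\oplus b_0}$), constructing the new differentials by extending the inclusion $N^{\oplus b_0}\hookrightarrow I_0$ along $N^{\oplus b_0}\hookrightarrow J^{\oplus b_0}$ via the injectivity of $I_0$, and then choosing the block-structured differentials so that the resulting complex $\tilde I$ of injectives is a quasi-injective resolution of $M^{\oplus b_0}$ with $\hinf \tilde I-\inf \tilde I\le \qid_\A N$. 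Applying (1) then yields $\qid_\A M=\qid_\A(M^{\oplus b_0})\le \qid_\A N$. For (2), given quasi-injective resolutions $I$ of $M$ and $J$ of $N$ (both normalized with $\sup=\hsup=0$) realizing the respective $\qid$'s, the naive direct sum $I\oplus J$ has homology $\H_i\cong M^{\oplus a_i}\oplus N^{\oplus b_i}$, which need not be of the shape $(M\oplus N)^{\oplus c_i}$. I would apply (1) to rescale to $I^{\oplus b_0}$ and $J^{\oplus a_0}$ so that the top-homology multiplicities align, and combine to obtain a quasi-injective resolution of $M\oplus N$ whose length invariant is bounded by $\sup\{\qid_\A M,\qid_\A N\}$.

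The main obstacle is the construction of $\tilde I$ in part (3): choosing block-structured differentials so that both $\partial\circ\partial=0$ is preserved and the homology has the clean form $M^{\oplus c_i}$ at every degree. A naive mapping-cone construction of a chain map $I\to J^{\oplus b_0}$ produces at the top a nontrivial extension of $M^{\oplus b_0}$ by $N^{\oplus b_{-1}}$, and at lower degrees leaves homologies of shape $N^{\oplus b_i}$ rather than replacing them with $M^{\oplus c_i}$. The proof therefore hinges on a careful choice of extensions afforded by the injectivity of each $I_i$, which is precisely the content that the dualization of the argument in \cite[Proposition 3.3]{GJT} must supply.
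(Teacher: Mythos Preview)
Your overall approach---dualizing \cite[Proposition 3.3]{GJT}---is exactly what the paper does (indeed, the paper gives no further detail). Part (1) is fine. The gaps are in (3) and (2), and you have correctly located them but not resolved them.

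For (3), splicing $J^{\oplus b_0}$ only at the top cannot work: any modification confined to degrees $\ge 0$ leaves $\H_i(\tilde I)\cong N^{\oplus b_i}$ for $i<0$ untouched, so $\tilde I$ is never a quasi-injective resolution of $M$ unless $b_i=0$ for all $i<0$. The fix is to splice at \emph{every} degree simultaneously. Form the complex $J'$ with $J'_i=J^{\oplus b_i}$ and zero differentials. For each $i$, the inclusion $Z_i/B_i=N^{\oplus b_i}\hookrightarrow J^{\oplus b_i}$ extends (by injectivity of $J$, not of $I_i$) along $Z_i/B_i\hookrightarrow I_i/B_i$ to a map $I_i/B_i\to J^{\oplus b_i}$; precomposing with $I_i\twoheadrightarrow I_i/B_i$ gives $\phi_i\colon I_i\to J'_i$ with $\phi_{i-1}\partial_i=0$, hence a chain map $\phi\colon I\to J'$. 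In the long exact sequence of $\Cone(\phi)$ every map $\H_i(I)\to\H_i(J')$ is the monomorphism $N^{\oplus b_i}\hookrightarrow J^{\oplus b_i}$, so $\H_i(\Cone\phi)\cong M^{\oplus b_i}$. A direct check gives $\hinf\Cone(\phi)-\inf\Cone(\phi)\le\hinf I-\inf I$, hence $\qid_\A M\le\qid_\A N$.

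For (2), replacing $I,J$ by $I^{\oplus b_0},J^{\oplus a_0}$ aligns only the degree-$0$ multiplicities; at degree $i$ you get $M^{\oplus a_ib_0}\oplus N^{\oplus a_0b_i}$, which is not of the form $(M\oplus N)^{\oplus c_i}$ unless $a_ib_0=a_0b_i$. The correct construction is the ``convolution'': set $I'=\bigoplus_j I[j]^{\oplus b_j}$ and $J'=\bigoplus_j J[j]^{\oplus a_j}$. Then $\H_i(I')\cong M^{\oplus(a*b)_i}$ and $\H_i(J')\cong N^{\oplus(a*b)_i}$ with the same multiplicity $(a*b)_i=\sum_j a_jb_{i-j}$, so $I'\oplus J'$ is a quasi-injective resolution of $M\oplus N$, and one computes $\hinf(I'\oplus J')-\inf(I'\oplus J')=\max\{\qid_\A M,\qid_\A N\}$.
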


\begin{lem}\label{spseq}
Let $P$ be a bounded below complex of projective $R$-modules, and let $I$ be a bounded above complex of injective $R$-modules.
Then for an $R$-module $N$ there are convergent spectral sequences
$$\E^{p,q}_2= \Ext^p_R(\H_q(P),N) \Longrightarrow \H_{-p-q}(\Hom_R(P,N)),$$
$$\E^{p,q}_2= \Ext^p_R(N,\H_q(I)) \Longrightarrow \H_{q-p}(\Hom_R(N,I)).$$
\end{lem}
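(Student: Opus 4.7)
Both spectral sequences are instances of the standard hyperhomology spectral sequences, and the plan is to realize each as one of the two spectral sequences attached to an appropriate double complex.

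For the first, I fix an injective resolution $\varepsilon\colon N\to J$ of $N$ and form $D^{p,q}=\Hom_R(P_{-q},J^p)$. The two canonical filtrations of its total complex yield spectral sequences with a common abutment. In one direction, each column $\Hom_R(P_{-q},J^\bullet)$ computes $\Ext^\bullet_R(P_{-q},N)$, which by projectivity of $P_{-q}$ is concentrated in degree $0$ at $\Hom_R(P_{-q},N)$; the resulting spectral sequence collapses on $\E_2$ and identifies the total cohomology with $\H_{-\bullet}(\Hom_R(P,N))$. In the other direction, injectivity of $J^p$ makes $\Hom_R(-,J^p)$ exact, so the first cohomology is $\Hom_R(\H_q(P),J^p)$; taking cohomology in the $J$-direction then yields $\Ext^p_R(\H_q(P),N)$ on $\E_2$. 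The second spectral sequence is entirely dual: take a projective resolution $F\to N$ and form $\Hom_R(F_i,I_j)$; the filtration that exploits injectivity of $I_j$ collapses to $\H_\bullet(\Hom_R(N,I))$ (since $\H_0(F)=N$ and $\H_i(F)=0$ for $i>0$), while the filtration that exploits projectivity of $F_i$ produces $\Hom_R(F_i,\H_q(I))$ at the first step, and then $\Ext^p_R(N,\H_q(I))$ on $\E_2$.

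The main obstacle, beyond careful bookkeeping of the homological versus cohomological indexing (which accounts for the shifts $-p-q$ and $q-p$ in the two abutments), is verifying convergence. This, however, is built into the hypotheses: $P$ bounded below together with $J$ bounded below places the first bicomplex in a quadrant, and $F$ bounded below together with $I$ bounded above does the same for the second. In both cases each diagonal of the total complex is a finite direct sum, so the filtrations are locally bounded and both spectral sequences converge strongly to the respective total cohomologies.
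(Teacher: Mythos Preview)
Your argument is correct: this is precisely the standard hyperhomology spectral sequence obtained from the double complex $\Hom_R(P,J)$ (resp.\ $\Hom_R(F,I)$), with convergence guaranteed by the boundedness hypotheses. The paper itself gives no argument and simply cites \cite[Theorem 11.34]{R}; your outline is exactly the proof one would find behind such a reference, so there is nothing to compare.
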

\begin{proof}
See for example, \cite[Theorem 11.34]{R}.
\end{proof}

\begin{prop}\label{7} Let $I$ be a quasi-injective resolution of an $R$-module $M$.
\begin{enumerate}[\rm(1)]
\item
If $S$ is a multiplicatively closed subset of $R$, then $I_S\cong I\otimes_R R_S$ is a quasi-injective resolution of $M_S$ over $R_S$. Moreover, $\qid_{R_S}M_S\le\qid_RM$.
\item
If $x \in R$ is a nonzero-divisor on both $R$ and $M$, then $\Hom_R(R/(x),I)$ is a quasi-injective resolution of the $R/(x)$-module $M/xM$. Moreover, one has $\qid_{R/(x)}M/xM\le\qid_RM$.
\item Suppose $R$ is local and $M$ is finitely generated. If $\qid_RM<\infty$, then quasi-injective dimension of $\widehat{M}$ is finite as an $R$-module, where $\widehat{M}=M\otimes_R\widehat{R}$ and $\widehat{R}$ is the completion of $R$ with respect to the maximal ideal $\m$. 
\end{enumerate}
\end{prop}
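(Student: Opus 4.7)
The plan is to prove each part by constructing, from a bounded quasi-injective resolution $I$ of $M$ that witnesses $\qid_R M$, a bounded quasi-injective resolution of the target module, and then comparing the quantity $\hinf-\inf$ for the two complexes.

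\emph{Part (1).} Localization at $S$ is exact, so $\H_i(I_S)\cong\H_i(I)_S\cong(M_S)^{\oplus b_i}$, and over a Noetherian ring the localization of an injective module is injective. Hence $I_S$ is a bounded complex of injective $R_S$-modules with the required homology (with the case $M_S=0$ handled by the convention $\qid=-\infty$). From $\sup I_S\le\sup I$ and $\inf I_S\ge\inf I$ one obtains $\hinf I_S-\inf I_S\le\hinf I-\inf I$, and taking the infimum over $I$ yields the bound.

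\emph{Part (2).} I apply the functor $\Hom_R(R/(x),-)$ to $I$. Each $\Hom_R(R/(x),I_i)$ is injective over $R/(x)$ because $\Hom_R(R/(x),-)$ is right adjoint to restriction of scalars, and the complex remains bounded. To identify its homology I invoke the second spectral sequence of Lemma \ref{spseq} with $N=R/(x)$,
\[
\E^{p,q}_2=\Ext^p_R(R/(x),\H_q(I))\cong\Ext^p_R(R/(x),M)^{\oplus b_q}\Longrightarrow\H_{q-p}(\Hom_R(R/(x),I)).
\]
Because $\pd_R R/(x)=1$ and $x$ is $M$-regular, the $p=0$ and $p\ge 2$ columns vanish, while $\Ext^1_R(R/(x),M)\cong M/xM$. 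The sequence degenerates to $\H_{q-1}(\Hom_R(R/(x),I))\cong(M/xM)^{\oplus b_q}$, exhibiting $\Hom_R(R/(x),I)$ as a bounded quasi-injective resolution of $M/xM$ over $R/(x)$. Since $\inf$ can only increase and $\hinf$ drops by at most $1$ under this operation, one obtains $\qid_{R/(x)}M/xM\le\qid_R M$.

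\emph{Part (3).} I tensor $I$ with $\widehat{R}$. Faithful flatness of $\widehat{R}$ gives $\H_i(I\otimes_R\widehat{R})\cong\widehat{M}^{\oplus b_i}$, and the complex stays bounded. The crux, and the main obstacle of the proposition, is to verify that each $I_i\otimes_R\widehat{R}$ is still injective \emph{as an $R$-module}. I plan to argue this in two steps. First, the standard flat base change theorem for injectives over Noetherian rings, applied to the flat homomorphism $R\to\widehat{R}$ of Noetherian rings, shows that $I_i\otimes_R\widehat{R}$ is an injective $\widehat{R}$-module. Second, any $\widehat{R}$-injective module $N$ is automatically $R$-injective: given an inclusion $A\hookrightarrow B$ of $R$-modules, flatness of $\widehat{R}$ produces an inclusion $A\otimes_R\widehat{R}\hookrightarrow B\otimes_R\widehat{R}$ of $\widehat{R}$-modules, and the adjunction $\Hom_R(-,N)\cong\Hom_{\widehat{R}}(-\otimes_R\widehat{R},N)$ converts $\widehat{R}$-injectivity of $N$ into $R$-injectivity. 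Putting these together yields a bounded quasi-injective resolution of $\widehat{M}$ over $R$, so $\qid_R\widehat{M}\le\qid_R M<\infty$. Parts (1) and (2) are formal consequences of exactness of localization and of the spectral sequence, respectively; the two-step injectivity transfer in (3) is where the real work lies.
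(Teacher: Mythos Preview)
Your arguments for parts (1) and (2) are correct and coincide with the paper's. The issue is in part (3): your step~1 is false. There is no ``standard flat base change theorem'' asserting that $E\otimes_R\widehat{R}$ is injective over $\widehat{R}$ for an arbitrary injective $R$-module $E$. A concrete counterexample: take $R=k[x,y]_{(x,y)}$ and $E=E_R(R)=k(x,y)$. Then $E\otimes_R\widehat{R}$ is the generic formal fiber $k[[x,y]]\,[\,(R\setminus\{0\})^{-1}\,]$. Choosing a power series $f\in k[[x]]$ transcendental over $k(x)$, the height-one prime $(y-f)\subset k[[x,y]]$ contracts to $(0)$ in $R$, so it survives in this localization; hence $E\otimes_R\widehat{R}$ is a domain strictly between $\widehat{R}$ and its fraction field. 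Its only associated prime over $\widehat{R}$ is $(0)$, so if it were injective it would be a $\mathrm{Frac}(\widehat{R})$-vector space, contradicting properness. Thus step~1 fails and you cannot feed into step~2.

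Your overall conclusion, however, is salvageable by a more direct route that bypasses $\widehat{R}$-injectivity entirely: over a Noetherian ring $R$, the tensor product of an injective module with \emph{any} flat module is again injective over $R$. Indeed, for an ideal $I\subseteq R$ and a finite free resolution $F_\bullet\to R/I$, one has $\Hom_R(F_\bullet,E\otimes_R S)\cong\Hom_R(F_\bullet,E)\otimes_R S$ because the $F_i$ are finitely generated free; flatness of $S$ then gives $\Ext^1_R(R/I,E\otimes_R S)\cong\Ext^1_R(R/I,E)\otimes_R S=0$, and Baer's criterion finishes. With this in hand, $I\otimes_R\widehat{R}$ is a bounded quasi-injective resolution of $\widehat{M}$ over $R$, and in fact you get the sharper bound $\qid_R\widehat{M}\le\qid_RM$. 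The paper takes a different path: it applies Matlis duality $(-)^{\vee}=\Hom_R(-,\E(R/\m))$ twice, using that $I^{\vee}$ is a bounded complex of flat modules and that $M^{\vee\vee}\cong\widehat{M}$ for finitely generated $M$, and then invokes a derived-category finiteness result to replace $I^{\vee\vee}$ by a bounded complex of injectives.
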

\begin{proof}
(1) We have $I\otimes_RR_S$ is a complex of injective $R_S$-modules; see \cite[Corollary 3.1.3 (a)]{BH}.
Since $R_S$ is flat, one has $\H_i(I\otimes_R R_S)\cong \H_i(I)\otimes_R R_S$, which finishes the proof.

(2) By Lemma \ref{spseq}, consider the spectral sequence
$$
\E^{p,q}_2= \Ext^p_R(R/(x),\H_q(I)) \Longrightarrow \H_{q-p}(\Hom_R(R/(x),I)).
$$
One has $\Hom_R(R/(x),I)$ is a complex of injective $R/(x)$-modules by \cite[Lemma 3.1.6]{BH}.  Since $x$ is a nonzero-divisor on both $R$ and $M$, we have 
$$\E^{p,q}_\infty\cong \begin{cases} \Ext^1_R(R/(x),\H_q(I)) & \text{if }\ p=1
\\
0 & \text{otherwise}. 
\end{cases}$$
Note that $\H_q(I)\cong \oplus^{b_q}M$ for some integers $b_q\geq 0$. Therefore we have 
\[\begin{array}{rl}\
\oplus^{b_q} M/xM &\cong \Hom_{R/(x)}(R/(x),\H_q(I)/x\H_q(I))\\
&\cong \Ext^1_R(R/(x),\H_q(I))\\
&\cong \H_{q-1}(\Hom_R(R/(x),I)).
\end{array}\]

(3) Assume $\qid_RM<\infty$ and $I$ is a bounded quasi-injective resolution of $M$. Set $(-)^{\vee}:=\Hom_R(-,\E(R/\m))$. Then $I^{\vee}$ is a bounded complex of flat $R$-modules, and therefore, it has a finite flat dimension in the derived category of $R$. By applying \cite[Theorem 8.3.18]{CFH}, one has $\id_RI^{\vee\vee}<\infty$. Hence there exists a semi-injective resolution $I^{\vee\vee}\overset{\simeq}\longrightarrow I'$, where $I'$ is a bounded complex of injective $R$-modules. Since homologies of $I^{\vee\vee}$ are isomorphic to direct sums of $M^{\vee\vee}\cong \widehat{M}$, we have $\qid_R\widehat{M}<\infty$. 
\end{proof}

\begin{rem}
In view of Proposition \ref{7}(3), we do not know if $\qid_RM$ and $\qid_{\widehat{R}}\widehat{M}$ coincide. However, if $R$ is Gorenstein, then one has $\qid_{\widehat{R}}\widehat{M} \le \qid_RM$; see Theorem \ref{main5}, \cite[Proposition 3.5(1)]{GJT}, and Theorem \ref{Bass}.
\end{rem}

The following proposition provides examples of modules with finite quasi-injective dimensions whose injective dimension may not be finite.

\begin{prop}\label{18} Let $R$ be a ring.
\begin{enumerate}[\rm(1)]
\item
If $\m$ is a maximal ideal of $R$ generated by $n$ elements, then the $R$-module $R/\m$ admits a quasi-injective resolution of length $n$. In particular, $\qid_R R/\m<\infty$
\item
Suppose $R=Q/(\xx)$ where $Q$ is a ring and $\xx=x_1,\dots,x_c$ a $Q$-regular sequence.
Let $M$ be an $R$-module, and $I$ be an injective resolution of $M$ over $Q$. Then $\Hom_Q(R,I)$ is a quasi-injective resolution of $M$ over $R$. In particular, if $\id_Q M<\infty$ then $\qid_R M<\infty$.
\end{enumerate}
\end{prop}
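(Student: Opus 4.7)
The plan for (1) is to apply the Matlis dual to the Koszul complex on $\m$. Write $\m=(y_1,\ldots,y_n)$ and let $\K=\K(y_1,\ldots,y_n;R)$ denote the Koszul complex, a bounded complex of free $R$-modules concentrated in degrees $0,\ldots,n$. It is standard that multiplication by each $y_i$ is chain-homotopic to zero on $\K$, so every Koszul homology module $\H_j(\K)$ is annihilated by $\m$ and is therefore a $k$-vector space, say $k^{a_j}$, where $k=R/\m$; note $a_0=1$ since $\H_0(\K)=R/\m$. Setting $I:=\Hom_R(\K,\E(k))$ produces a bounded complex of injective $R$-modules of length $n$. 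Because $\E(k)$ is injective, $\Hom_R(-,\E(k))$ is exact and commutes with homology (up to reindexing), yielding $\H_i(I)\cong\Hom_R(\H_{-i}(\K),\E(k))\cong k^{a_{-i}}$ by Matlis self-duality of $k$. Since $\H_0(I)\cong k\ne 0$, the ``not all zero'' condition is satisfied and $I$ is a quasi-injective resolution of $R/\m$ of length $n$.

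For (2), the plan is to apply the second spectral sequence of Lemma \ref{spseq} with $R$ replaced by $Q$ and $N=R$. By the same appeal to \cite[Lemma 3.1.6]{BH} used in the proof of Proposition \ref{7}(2), $\Hom_Q(R,I)$ is a bounded above complex of injective $R$-modules. The spectral sequence
\[
\E^{p,q}_2=\Ext^p_Q(R,\H_q(I))\Longrightarrow \H_{q-p}(\Hom_Q(R,I))
\]
collapses onto the row $q=0$ because $I$ is an injective $Q$-resolution of $M$, giving $\H_{-p}(\Hom_Q(R,I))\cong\Ext^p_Q(R,M)$ for every $p$. To identify the right side I would use the Koszul free resolution $\K(\xx;Q)\to R$ (exact because $\xx$ is $Q$-regular); after applying $\Hom_Q(-,M)$, each differential becomes multiplication by an entry of $\xx$, which annihilates $M$ since $M$ is an $R$-module. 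The differentials vanish, so $\Ext^p_Q(R,M)\cong M^{\binom{c}{p}}$, and each homology of $\Hom_Q(R,I)$ is a direct sum of copies of $M$, with the $p=0$ term recovering $M$. Thus $\Hom_Q(R,I)$ is a quasi-injective resolution of $M$ over $R$; if in addition $\id_Q M<\infty$, one may take $I$ bounded, whence $\Hom_Q(R,I)$ is bounded and $\qid_R M<\infty$.

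Neither step poses a deep obstacle. The main care required is in tracking homological versus cohomological conventions: in (1), verifying that the exact contravariant functor $\Hom_R(-,\E(k))$ intertwines $\H_i$ and $\H_{-i}$ in the expected way; and in (2), checking that the spectral sequence collapses cleanly even when $I$ is only bounded above. Both reduce to standard checks, and the underlying Koszul computations are routine.
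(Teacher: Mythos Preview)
Your proof is correct and follows essentially the same approach as the paper: in (1) you dualize the Koszul complex into $\E(k)$, and in (2) you identify the homology of $\Hom_Q(R,I)$ with $\Ext^*_Q(R,M)$ and compute the latter via the Koszul resolution of $R$ over $Q$. The only cosmetic difference is that in (2) you route through the spectral sequence of Lemma~\ref{spseq}, whereas the paper (implicitly) uses that $I$ is a genuine injective resolution of $M$, so $\H_{-p}(\Hom_Q(R,I))=\Ext^p_Q(R,M)$ by definition; your spectral sequence collapses to exactly this statement, so no harm done, just a slight detour.
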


\begin{proof}
(1) Consider the the Koszul complex $\K(\xx)$ on a system of generators $\xx=x_1,\dots,x_n$ of  $\m$.
Then $\Hom_R(\K(\xx), \E(R/\m))$ is a bounded complex of injective $R$-modules where $\E(R/\m)$ is the injective envelope of $R/\m$. Then one has $$\H_{-i}(\Hom_R(\K(\xx), \E(R/\m))) \cong \Hom_R(\H_i(\xx),\E(R/\m)),$$ where the later is a finite dimensional vector space over $R/\m$. 

(2) Since $\xx$ is a regular sequence, the Koszul complex $\K(\xx,Q)$ is a free resolution of 
$R$ over $Q$. For each $0\le i\le c$ there are isomorphisms
\[\begin{array}{rl}\
\Ext_Q^i(R,M) &\cong \H_{-i}(\Hom_Q(\K(\xx,Q),M))\\
&\cong \H_{c-i}(\xx,M)\\
&\cong M^{\oplus\binom{c}{i}}
\end{array}\]
of $Q$-modules, where the second isomorphism comes from \cite[Propositions 1.6.9 and 1.6.10]{BH} and the last isomorphism holds since $\xx M=0$.
\end{proof}

As a corollary of Proposition \ref{18}(2), we obtain the following.

\begin{cor}\label{36}
Let $R$ be a quotient of a regular local ring by a regular sequence. Let $M$ be an $R$-module.
Then $\qid_RM<\infty$.
\end{cor}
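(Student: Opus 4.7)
The plan is to reduce immediately to Proposition \ref{18}(2) by viewing $M$ as a module over the ambient regular local ring and invoking the finite global dimension of a regular local ring.

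Write $R = Q/(\xx)$ with $Q$ a regular local ring and $\xx = x_1,\dots,x_c$ a $Q$-regular sequence. Via the surjection $Q \twoheadrightarrow R$, any $R$-module $M$ becomes a $Q$-module. Since $Q$ is regular local, its global dimension equals $\dim Q$, which is finite, so $\id_Q M \le \dim Q < \infty$ for every $Q$-module $M$ (not only the finitely generated ones). Now Proposition \ref{18}(2) directly yields $\qid_R M < \infty$, because from any bounded injective resolution $I$ of $M$ over $Q$, the complex $\Hom_Q(R, I)$ is a bounded quasi-injective resolution of $M$ over $R$.

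The only real content beyond applying Proposition \ref{18}(2) is the statement that over a regular local ring every module has finite injective dimension, which follows from $\text{gldim}(Q) = \dim Q$. I do not expect any significant obstacle: there are no finiteness hypotheses on $M$ to worry about, and no spectral sequence or completion argument is needed here since the previous proposition has already done the bookkeeping.
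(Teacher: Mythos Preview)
Your proof is correct and is exactly the argument the paper intends: the corollary is stated immediately after Proposition~\ref{18}(2) with no separate proof, and the only missing ingredient is that a regular local ring has finite global dimension, which you supply.
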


We close this section by showing that quasi-dimensions remain finite when modding out by high enough powers of a regular sequence. First, we need the following lemma.

\begin{lem}\label{T41}
Let $Q$ be a ring and let $f\in Q$ be a nonzero-divisor. Let $M$ be a $Q/(f)$-module. Then the following hold.
\begin{enumerate}[\rm(1)]
    \item If $F$ is a quasi-projective resolution of $M$ over $Q$, then $\H_i(F\otimes_QQ/(f^n))\cong \H_i(F)\oplus \H_{i-1}(F)$ for all $n>i$. In particular, if $\hsup F<\infty$, then the complex $F\otimes_QQ/(f^n)$ is a quasi-projective resolution of $M$ over $Q/(f^n)$, for all $n > \hsup F$.
    \item If $I$ is a quasi-injective resolution of $M$ over $Q$, then $\H_i(\Hom_Q(Q/(f^n),I)) \cong \H_i(I)\oplus \H_{i+1}(I)$ for all $n>-i$.  In particular, if $\hinf I>-\infty$, then the complex $\Hom_Q(Q/(f^n),I)$ is a quasi-injective resolution of $M$ over $Q/(f^n)$ for all $n>-\hinf I$
\end{enumerate}
\end{lem}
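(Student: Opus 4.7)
My plan is to derive both parts from the short exact sequence
$$0 \to Q \xrightarrow{f^n} Q \to Q/(f^n) \to 0$$
by tensoring with $F$ in part (1) and applying $\Hom_Q(-, I)$ in part (2), then reading off homology from the resulting long exact sequence. In part (1), flatness of each $F_j$ gives the exact sequence of complexes
$$0 \to F \xrightarrow{f^n} F \to F \otimes_Q Q/(f^n) \to 0.$$
The associated long exact sequence has connecting maps $\H_j(F) \xrightarrow{f^n} \H_j(F)$, and these vanish because $\H_j(F) \cong M^{\oplus a_j}$ and $fM = 0$. So the long exact sequence collapses to short exact sequences
$$0 \to \H_i(F) \to \H_i(F \otimes_Q Q/(f^n)) \to \H_{i-1}(F) \to 0. \qquad (\ast)$$
In part (2), injectivity of each $I_j$ makes $\Hom_Q(-, I_j)$ exact, yielding an exact sequence of complexes $0 \to \Hom_Q(Q/(f^n), I) \to I \xrightarrow{f^n} I \to 0$, and the same vanishing argument produces
$$0 \to \H_{i+1}(I) \to \H_i(\Hom_Q(Q/(f^n), I)) \to \H_i(I) \to 0. \qquad (\ast\ast)$$

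The key step---and the main obstacle---is to show that $(\ast)$ and $(\ast\ast)$ split. For $(\ast)$, I plan to construct an explicit chain-level section of the connecting map $\H_i(F \otimes_Q Q/(f^n)) \to \H_{i-1}(F)$: given a cycle $z \in F_{i-1}$, the relation $f\cdot[z]=0$ in $\H_{i-1}(F)$ supplies $w \in F_i$ with $\partial w = fz$, and then $f^{n-1}w$ satisfies $\partial(f^{n-1}w)=f^n z$, so its class modulo $f^n F_i$ is a cycle in $F \otimes_Q Q/(f^n)$ at degree $i$ mapping to $[z]$, giving a candidate section $[z] \mapsto [f^{n-1}w]$. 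Well-definedness requires that replacing $w$ by $w+c$ for a cycle $c \in F_i$ changes $[f^{n-1}w]$ by $[f^{n-1}c]$, whose class sits in the image of the inclusion $\H_i(F) \hookrightarrow \H_i(F \otimes_Q Q/(f^n))$ from $(\ast)$; since $f$ kills $\H_i(F) \cong M^{\oplus a_i}$, this change equals $f^{n-1}[c]=0$ whenever $n \ge 2$, and the hypothesis $n>i$ (together with the trivial cases $i \le \inf F$, where the outer terms of $(\ast)$ already vanish) is sufficient. Part (2) is dual: given a cycle $z \in I_i$, the vanishing $f\cdot[z]=0$ together with surjectivity of $f^n$ on each $I_{i+1}$ (a consequence of injectivity) lets one construct $z' \in (0 :_{I_i} f^n) = \Hom_Q(Q/(f^n), I_i)$ with $[z']=[z]$ in $\H_i(I)$, providing the desired section of $(\ast\ast)$; the analogous indeterminacy is absorbed when $n > -i$.

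Finally, the ``In particular'' statements follow by applying the formulas index by index. Suppose $\hsup F < \infty$ and $n > \hsup F$. For each $i \le \hsup F$ one has $n > i$, so $(\ast)$ splits and $\H_i(F \otimes_Q Q/(f^n)) \cong M^{\oplus(a_i+a_{i-1})}$, a direct sum of copies of $M$. For $i > \hsup F$, the outer terms of $(\ast)$ vanish except possibly $\H_{i-1}(F) = \H_{\hsup F}(F) = M^{\oplus a_{\hsup F}}$ at $i = \hsup F + 1$, so the middle is again a direct sum of copies of $M$. Thus $F \otimes_Q Q/(f^n)$ is a quasi-projective resolution of $M$ over $Q/(f^n)$. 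The argument for $\Hom_Q(Q/(f^n), I)$ under $\hinf I > -\infty$ and $n > -\hinf I$ is symmetric.
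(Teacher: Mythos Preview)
Your approach is correct, and it shares the same core idea as the paper: both arguments obtain the short exact sequence $(\ast)$ from the cone/long exact sequence of $f^n\colon F\to F$ and then split it using the fact that $f$ annihilates each $\H_j(F)$. The execution, however, is different. The paper builds a partial nullhomotopy $\beta^n_i\colon F_i\to F_{i+1}$ of $f^n$ by induction on $n$ (valid for $0\le i<n$), uses it to manufacture a chain map $\Cone(f^n)\to F[-1]\oplus F$ in degrees $\le i$, and then invokes the Five Lemma. You instead write down an explicit element-level section $[z]\mapsto[\overline{f^{n-1}w}]$ of the connecting homomorphism. Your route is shorter and more elementary---no inductive homotopy, no mapping-cone bookkeeping---and in fact it shows the splitting for every $n\ge 2$, not only for $n>i$; the paper's chain-level construction, on the other hand, yields a more structured object (a map of complexes rather than just a map on homology).

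One caution for part~(2): your sketch is right, but the ``analogous indeterminacy'' is genuinely more delicate than in part~(1), and it is essential that you use the lift coming from $f\cdot[z]=0$ rather than from $f^n\cdot[z]=0$. Concretely: from $\partial w=fz$ take $u\in I_{i+1}$ with $f^n u=f^{n-1}w$ (surjectivity of $f^n$ on $I_{i+1}$) and set $z'=z-\partial u$. The ambiguity in $u$ lies in $(0:_{I_{i+1}}f^n)$ and contributes a boundary in $\Hom_Q(Q/(f^n),I)$; the ambiguity in $w$ lies in $Z_{i+1}(I)$, and the resulting discrepancy in $[z']$ equals $\delta\big(f^{n-1}[c]\big)=0$ for $n\ge 2$. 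If instead one started from an arbitrary $v$ with $\partial v=f^n z$ and lifted $v=f^n u$, the class $[z']$ would only be well-defined modulo the image of $\H_{i+1}(I)$, which is the whole obstruction. Since you explicitly invoke $f\cdot[z]=0$, you are on the right track; just make this choice explicit when you write the argument out.
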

\begin{proof}
We only prove (1), and the dual argument applies to (2). 
Let $$F=(\cdots \to F_2 \overset{\partial_2} \to F_1 \overset{\partial_1}\to F_0 \to 0)$$ be a quasi-projective resolution of $M$. 

{\bf Step 1.} For each $n\geq 1$ and $0\leq i <n$, we construct maps $\beta^n_i:F_i\to F_{i+1}$ such that $f^n\Id_{F_i}=\partial_{i+1}\beta^n_i+\beta^n_{i-1}\partial_i$. Set $B_i=\Im(\partial_{i+1})$ and $Z_i=\ker(\partial_i)$.
For $n=1$, since $\H_0(F) \overset{f} \to \H_0(F)$ is a zero map, $fF_0\subseteq B_1$. Therefore, there is a map $\beta^1_0:F_0\to F_1$ such that $f\Id_{F_0}=\partial_1\beta^1_0$. Let $n>1$ and assume we have constructed the maps $\beta^j_i:F_i\to F_{i+1}$ for $1\leq j \leq n$ and $0 \leq i < j$ such that $f^j\Id_{F_i}=\partial_{i+1}\beta^j_i+\beta^j_{i-1}\partial_i$.  
Then we have 
\[\begin{array}{rl}\
\partial_{n-1}(f^{n-1}\Id_{F_{n-1}}-\beta^{n-1}_{n-2}\partial_{n-1}) &= f^{n-1}\partial_{n-1}-\partial_{n-1}\beta^{n-1}_{n-2}\partial_{n-1}\\
&= f^{n-1}\partial_{n-1}-f^{n-1}\partial_{n-1}-\beta^{n-1}_{n-3}\partial_{n-2}\partial_{n-1}\\
&= 0.
\end{array}\]
It follows that for any $x\in F_{n-1}$ we have $y=f^{n-1}x-\beta^{n-1}_{n-2}\partial_{n-1}(x) \in Z_{n-1}$ and therefore $fy\in fZ_{n-1} \subseteq B_{n-1}$. Hence there exists a map $\beta^n_{n-1}:F_{n-1} \to F_n$ such that $$f^n\Id_{F_{n-1}}=\partial_n\beta^n_{n-1}+f\beta^{n-1}_{n-2}\partial_{n-1}.$$
By defining $\beta^n_i:=f\beta^{n-1}_i$ for $0\leq i< n-1$, we are done.

{\bf Step 2.} For each $i\geq 0$, we show that $\H_i(\Cone(f^n))\cong \H_{i-1}(F)\oplus \H_{i}(F)$ for all $n>i$ where $\Cone(f^n)$ is the mapping cone of $F\overset{f^n}\to F$. Fix $i\geq 0$ and $n>i$. By using Step 1, for $0\leq j\leq i$ we define the maps
$$
\begin{CD}
\alpha_j: \Cone(f^n)_j @>{\left(\begin{array}{ccc}
1_{F_{j-1}} & 0  \\
\beta^n_{j-1} & 1_{F_j} 
\end{array}\right)} >>  (F[-1]\oplus F)_j.
\end{CD}$$
Since differentials of $\Cone(f^n)$ and $F[-1] \oplus F$ respectively are $\left(\begin{array}{ccc}
-\partial_{j-1} & 0  \\
f^n & \partial_j 
\end{array}\right)$ and 
$\left(\begin{array}{ccc}
-\partial_{j-1} & 0  \\
0 & \partial_j
\end{array}\right)$, one can easily check that $\alpha_j$ is a map of complexes for $0\leq j \leq i$.  Since $f\H_i(F)=0$ for all $i$, there is a commutative diagram with exact rows
$$
\begin{CD}
0 @>>> \H_i(F)  @>>> \H_i(\Cone(f^n)) @>>> \H_{i-1}(F) @>>> 0 \\
@. @VV=V @VVV @VV=V \\
0 @>>> \H_i(F) @>>> \H_{i-1}(F) \oplus \H_i(F) @>>> \H_{i-1}(F) @>>> 0,
\end{CD}
$$ 
where the middle map is induced by $\alpha_i$. Hence by the Five Lemma, we get the isomorphism $\H_i(\Cone{f^n})\cong \H_{i-1}(F) \oplus \H_i(F)$.

Note that for all $i\geq 0$ and $n\geq 1$, we have $\H_i(F\otimes_QQ/(f^n))\cong \H_i(F\otimes_Q\K(f^n))\cong \H_i(\Cone(f^n))$ where $\K(f^n)$ is the Koszul complex. This finishes the proof.
\end{proof}

\begin{prop}\label{C51}
Let $R=Q/(f)$ where $f\in Q$ is a nonzero-divisor. Let $M$ be an $R$-module.
\begin{enumerate}[\rm(1)]
    \item If $\qpd_QM<\infty$ then $\qpd_{Q/(f^n)}M<\infty$ for all $n\gg 0$. Conversely, if  $\qpd_RM<\infty$ then $\qpd_QM<\infty$.
    \item If $\qid_QM<\infty$ then $\qid_{Q/(f^n)}M<\infty$ for all $n\gg 0$. Conversely, if  $\qid_RM<\infty$ then $\qid_QM<\infty$.
\end{enumerate}
\end{prop}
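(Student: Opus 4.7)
The proposition contains four implications which split into two pairs. The two ``forward'' statements, from $Q$ down to $Q/(f^n)$, follow immediately from Lemma~\ref{T41}, while the two converses, from $R=Q/(f)$ back up to $Q$, are dual and both rest on the observation that modules projective (resp.\ injective) over $R$ have projective (resp.\ injective) dimension at most one over $Q$.

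For the forward direction of (1), I would take any bounded quasi-projective resolution $F$ of $M$ over $Q$, set $s=\hsup F<\infty$, and choose $n>s$. Then Lemma~\ref{T41}(1) gives
\[
\H_i(F\otimes_QQ/(f^n))\cong\H_i(F)\oplus\H_{i-1}(F)\cong M^{\oplus(a_i+a_{i-1})},
\]
and $F\otimes_QQ/(f^n)$ is a bounded complex of projective $Q/(f^n)$-modules, hence a bounded quasi-projective resolution of $M$ over $Q/(f^n)$. The same recipe with Lemma~\ref{T41}(2) handles the forward direction of (2): for a bounded quasi-injective resolution $I$ of $M$ over $Q$ and any $n>-\hinf I$, the complex $\Hom_Q(Q/(f^n),I)$ is a bounded quasi-injective resolution of $M$ over $Q/(f^n)$.

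For the converses I would first establish the dimensional bounds. If $F$ is projective over $R$, then $F$ is a direct summand of some free $R$-module $R^{(\Lambda)}$, and the exact sequence $0\to Q^{(\Lambda)}\xrightarrow{f}Q^{(\Lambda)}\to R^{(\Lambda)}\to 0$ shows $\pd_QR^{(\Lambda)}\le 1$, hence $\pd_QF\le 1$. Dually, if $I$ is injective over $R$, the Koszul resolution $0\to Q\xrightarrow{f}Q\to R\to 0$ gives $\Tor^Q_q(N,R)=0$ for $q\ge 2$, while $R$-injectivity of $I$ gives $\Ext^p_R(-,I)=0$ for $p\ge 1$; the change-of-rings spectral sequence $\E^{p,q}_2=\Ext^p_R(\Tor^Q_q(N,R),I)\Longrightarrow\Ext^{p+q}_Q(N,I)$ then collapses to yield $\Ext^i_Q(N,I)=0$ for all $i\ge 2$ and every $Q$-module $N$, i.e., $\id_QI\le 1$. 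Given these bounds, the converse of (1) proceeds by taking a bounded quasi-projective resolution $F$ of $M$ over $R$, regarding it as a bounded $Q$-complex each of whose terms has projective dimension at most $1$, and forming a Cartan--Eilenberg projective $Q$-resolution $\widetilde{F}\to F$. Each column of the resulting double complex has length at most $1$ and $F$ is bounded, so the total complex $\widetilde{F}$ is a bounded complex of projective $Q$-modules quasi-isomorphic to $F$ over $Q$, whence $\H_i(\widetilde{F})\cong\H_i(F)\cong M^{\oplus a_i}$. Thus $\widetilde{F}$ is a bounded quasi-projective resolution of $M$ over $Q$. The converse of (2) is dual, using a Cartan--Eilenberg injective $Q$-coresolution of a bounded quasi-injective resolution of $M$ over $R$.

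The main technical obstacle is the injective bound $\id_QI\le 1$: the projective analogue is transparent, but the injective case needs the change-of-rings spectral sequence above, or equivalently the identification of $I$ with the $f$-torsion submodule of its $Q$-injective hull $E$ (using that this $f$-torsion is essential over $R$ and $I$ admits no proper essential $R$-extension). Granted these two dimensional bounds, the Cartan--Eilenberg replacement is routine homological algebra, and the forward directions are essentially bookkeeping on Lemma~\ref{T41}.
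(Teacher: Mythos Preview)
Your argument is correct and follows the same overall architecture as the paper: forward directions via Lemma~\ref{T41}, converses by first bounding the $Q$-homological dimension of projective/injective $R$-modules and then replacing the bounded $R$-complex by a quasi-isomorphic bounded complex of projectives/injectives over $Q$. The differences are in the two technical ingredients. For the bound $\id_QI\le 1$ the paper uses the other change-of-rings spectral sequence $\Ext^p_R(N,\Ext^q_Q(R,E))\Rightarrow\Ext^{p+q}_Q(N,E)$, which forces a preliminary reduction to finitely generated $R$-modules $N$ via the criterion $\Ext^2_Q(Q/\p,E)=0$; your Tor-based sequence $\Ext^p_R(\Tor^Q_q(N,R),I)\Rightarrow\Ext^{p+q}_Q(N,I)$ collapses immediately for arbitrary $N$ and is cleaner. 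For the replacement step the paper appeals to the thickness of the subcategory of finite injective dimension in $\D(Q)$ together with a bounded semi-injective resolution, whereas your Cartan--Eilenberg construction is a concrete realisation of the same idea and has the virtue of giving an explicit length bound. (For the converse of (1) the paper simply cites the analogous result from \cite{GJT}.)
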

\begin{proof}
(1) It follows from Lemma \ref{T41} and \cite[Proposition 3.5(3)]{GJT}.

(2) If $\qid_QM<\infty$, then by Lemma \ref{T41}, $\qid_{Q/(f^n)}M<\infty$ for all $n\gg 0$. 
Suppose $\qid_RM<\infty$. First we show that if $E$ is an injective $R$-module, then $\id_QE<\infty$. To this end, by using \cite[Corollary 3.1.12]{BH} we show $\Ext^2_Q(Q/\p,E)=0$ for all $\p \in \spec Q$. Let $\p \in \spec Q$. If $f \notin \p$, then the exact sequence $0\to Q/\p \overset{f}\to Q/\p \to Q/(f,\p) \to 0$ induces an exact sequence 
$$0\to \Ext^2_Q(Q/\p,E) \to \Ext^2_Q(Q/(f,\p),E) \to \Ext^3_Q(Q/\p,E)\to 0.$$ Therefore if we show $\Ext^2_Q(N,E)=0$ for every finitely generated $R$-module $N$, we are done. Consider the change of ring spectral sequence 
$$\Ext^p_R(N,\Ext^q_Q(R,E))\Longrightarrow \Ext^{p+q}_Q(N,E).$$
Since $f$ is a regular element and $E$ is an $R$-module, we have $\Ext^q_Q(R,E)\cong E$ for $q=0,1$ and zero otherwise. Therefore $\Ext^p_R(N,\Ext^q_Q(R,E))=0$ for all $p>0$ and $q>1$. This implies that $\Ext^n_Q(N,E)=0$ for all $n\geq 2$. 

Now if $\qid_RM<\infty$, then there exists a bounded quasi-injective resolution $I$ of $M$ over $R$. Since by \cite[Example 3.2]{DGI}, the subcategory of complexes of finite injective dimension is thick in $\D(Q)$, we see by induction that $\id_{\D(Q)}I<\infty$. Therefore there exists a bounded semi-injective complex $I'$ over $Q$ and a quasi-isomorphism $I \overset{\simeq}\longrightarrow I'$; see \cite[Theorem 8.2.8]{CFH}. Therefore $\qid_QM<\infty$.
\end{proof}

\begin{ques}
Let $R=Q/(f)$ where $f$ is a nonzero-divisor in $Q$, and let $M$ be an $R$-module. Is it true that if $\qpd_QM<\infty$ ($\qid_QM<\infty$) then $\qpd_RM<\infty$ ($\qid_RM<\infty$)?
\end{ques}

\section{Generalization of a theorem of Bass}

In this section, we establish a version of the Bass Formula and generalize a theorem of Bass for finitely generated modules of finite quasi-injective dimension. 

\begin{lem}\label{L31} Let $M$ be a finitely generated $R$-module and let 
$$I=(0\longrightarrow I_0 \overset{\partial_0} \longrightarrow I_{-1} \overset{\partial_{-1}} \longrightarrow I_{-2} \longrightarrow \dots )$$ be a quasi-injective resolution of $M$. Set $B_i =\Im \partial_i$ and $Z_i=\Ker \partial_i$. Then for every finitely generated $R$-module $N$, $\Ext^j_R(N,B_i)$ and $\Ext^j_R(N,Z_i)$ are finitely generated $R$-modules, for all $i\in \Z$ and all $j>0$.
\end{lem}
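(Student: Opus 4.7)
The plan is to reduce everything to finite generation of $\Ext^j_R(N,\H_i(I))$ by a descending induction on $i$, exploiting injectivity of $I_i$ to pass freely between cycles and boundaries. By Remark \ref{8}(3), after a harmless shift I may assume $\sup I=0$, so $I_i=B_i=Z_i=0$ for $i\ge 1$ and only $i\le 0$ is of interest. Throughout, I will use the two short exact sequences extracted from $I$,
$$0\to Z_i\to I_i\to B_i\to 0\qquad\text{and}\qquad 0\to B_{i+1}\to Z_i\to\H_i(I)\to 0,$$
together with two standing facts: $\H_i(I)\cong M^{\oplus b_i}$ is finitely generated (so all its $\Ext$ against the finitely generated $N$ are finitely generated, since $R$ is Noetherian), and $\Ext^j_R(N,I_i)=0$ for all $j\ge 1$ by injectivity of $I_i$.

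For the base case $i=0$, since $B_1=0$ the second sequence degenerates to $Z_0\cong \H_0(I)\cong M^{\oplus b_0}$, so $\Ext^j_R(N,Z_0)$ is finitely generated for every $j\ge 0$. The long exact sequence of the first short exact sequence at $i=0$ then gives $\Ext^j_R(N,B_0)\cong \Ext^{j+1}_R(N,Z_0)$ for all $j\ge 1$, which are finitely generated.

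For the inductive step, assume $\Ext^j_R(N,B_{i+1})$ is finitely generated for all $j\ge 1$. The long exact sequence of $0\to B_{i+1}\to Z_i\to \H_i(I)\to 0$ places $\Ext^j_R(N,Z_i)$, for each $j\ge 1$, as the middle term of a five-term exact sequence whose outer four terms
$$\Ext^{j-1}_R(N,\H_i(I)),\ \Ext^j_R(N,B_{i+1}),\ \Ext^j_R(N,\H_i(I)),\ \Ext^{j+1}_R(N,B_{i+1})$$
are all finitely generated. Since finitely generated $R$-modules form a Serre subcategory of $\mod R$, it follows that $\Ext^j_R(N,Z_i)$ is finitely generated. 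Finally, the long exact sequence of $0\to Z_i\to I_i\to B_i\to 0$ combined with $\Ext^{\ge 1}_R(N,I_i)=0$ yields $\Ext^j_R(N,B_i)\cong \Ext^{j+1}_R(N,Z_i)$ for every $j\ge 1$, closing the induction.

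No step presents a genuine obstacle; the only item requiring a bit of care is the $j=1$ case of the five-term sequence, where the leftmost group is $\Hom_R(N,\H_i(I))$ and one must observe that it is finitely generated, which it is because both $N$ and $\H_i(I)$ are finitely generated over the Noetherian ring $R$.
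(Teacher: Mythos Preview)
Your proof is correct and follows essentially the same route as the paper: descending induction on $i$, using the two short exact sequences $0\to Z_i\to I_i\to B_i\to 0$ and $0\to B_{i+1}\to Z_i\to \H_i(I)\to 0$, starting from $Z_0\cong \H_0(I)$ and using injectivity of $I_i$ to get $\Ext^j_R(N,B_i)\cong \Ext^{j+1}_R(N,Z_i)$. The only cosmetic differences are that the paper uses the three-term segment $\Ext^j_R(N,B_{i+1})\to\Ext^j_R(N,Z_i)\to\Ext^j_R(N,\H_i(I))$ rather than your five-term version (either suffices since finitely generated modules over a Noetherian ring are closed under submodules, quotients, and extensions), and your appeal to Remark~\ref{8}(3) is unnecessary since the hypothesis already has $\sup I=0$.
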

\begin{proof} Consider the exact sequences 
$$\begin{cases}
0\to Z_i\to I_i\to B_i\to0\\
0\to B_{i+1}\to Z_i\to \H_i(I)\to0
\end{cases}
(i\in\Z).$$
Since $Z_0\cong \H_0(I) \cong \oplus^{b_0}M$ for some $b_0\geq 0$, we have $\Ext^j_R(N,Z_0)$ is finitely generated for all $j$. From the first exact sequence above, we get $\Ext^{j}_R(N,B_0)\cong \Ext^{j+1}_R(N,Z_0)$ for all $j>0$. This proves the claim for $\Ext^{j}_R(N,B_0)$. 

Next assume $i<0$ and the assertion holds for $B_r$ and $Z_r$, $r=0,-1,\dots,i+1$. From the second exact sequence above, we obtain an exact sequence
$$\Ext^j_R(N,B_{i+1}) \to \Ext^j_R(N,Z_i) \to \Ext^j_R(N,\H_i(I)).$$
By assumptions and induction, we see that $\Ext^j_R(N,Z_i)$ is finitely generated for all $j>0$. Finally since $\Ext^j_R(N,B_i)\cong \Ext^{j+1}_R(N,Z_i)$, we have $\Ext^j_R(N,B_i)$ is finitely generated for all $j>0$.
\end{proof}

\begin{thm}\label{Bass}
Let $(R,\m,k)$ be a local ring and let $M$ be a nonzero finitely generated $R$-module. If $\qid_RM<\infty$ then $\qid_RM=\depth R$.
\end{thm}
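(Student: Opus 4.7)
The plan is to prove the two inequalities $\qid_R M \ge \depth R$ and $\qid_R M \le \depth R$ separately. Throughout, fix a bounded quasi-injective resolution $I$ of $M$ and, using Remark~\ref{8}(3) together with a degree shift, normalize so that $\sup I = \hsup I = 0$. Set $s = \inf I$ and $t = \hinf I$, so the given $I$ already realizes $\qid_R M \le t - s$.

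For the lower bound, the key observation is that the tail of $I$ below degree $t$ is acyclic. Writing $Z_t = \Ker(\partial_t\colon I_t \to I_{t-1})$, the exact sequence $0 \to Z_t \to I_t \to I_{t-1} \to \cdots \to I_s \to 0$ is an injective coresolution of $Z_t$ of length $t - s$, so $\id_R Z_t \le t - s$. On the other hand, the short exact sequence $0 \to B_{t+1} \to Z_t \to M^{\oplus b_t} \to 0$ (with $b_t \ne 0$) exhibits the finitely generated nonzero module $M^{\oplus b_t}$ as a quotient of $Z_t$. Take $\xx = x_1, \ldots, x_d$ a maximal $R$-regular sequence in $\m$, so $d = \depth R$; applying Nakayama's lemma to the quotient $M^{\oplus b_t}$ forces $\xx Z_t \ne Z_t$, and the Koszul identification $\Ext^d_R(R/\xx R, Z_t) \cong Z_t / \xx Z_t \ne 0$ then gives $\id_R Z_t \ge d$. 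Combining, $t - s \ge \depth R$ for every $I$, so $\qid_R M \ge \depth R$.

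For the upper bound, I would proceed by induction on $\depth R$. At the inductive step with $\depth R \ge 1$, the approach is to pick $x \in \m$ regular on both $R$ and $M$ (existence by prime avoidance, with a preliminary reduction needed when $\depth_R M = 0$) and invoke Proposition~\ref{7}(2). A refinement of that proposition --- using that the spectral-sequence-induced degree shift in the homologies of $\Hom_R(R/(x), I)$ drops the infimum by exactly one --- gives $\qid_{R/(x)}(M/xM) \le \qid_R M - 1$. Combined with a converse inequality $\qid_R M \le \qid_{R/(x)}(M/xM) + 1$, obtained by lifting a quasi-injective resolution over $R/(x)$ to one over $R$ adding a single layer, and with the inductive hypothesis applied to $R/(x)$, this yields $\qid_R M = \depth R$. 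The base case $\depth R = 0$ requires showing $\qid_R M = 0$ directly, likely by demonstrating that the module $Z_t$ attached to any optimal quasi-injective resolution of $M$ must be injective in this setting.

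The main obstacle I anticipate is the upper bound. The lower bound goes through cleanly via the injective-coresolution tail combined with the Nakayama--Koszul lower bound on $\id_R Z_t$, but the upper bound suffers from the absence of a canonical minimal quasi-injective resolution. Lifting a resolution across $R \twoheadrightarrow R/(x)$ to add a single layer is a delicate homological construction, and the base case $\depth R = 0$ does not follow immediately from an inductive reduction.
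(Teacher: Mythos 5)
Your lower bound $\qid_RM\ge\depth R$ is correct and is in substance the paper's own argument for that half: exactness of $I$ below $\hinf I$ gives $\id_RZ\le\hinf I-\inf I$ for $Z=\Ker(\partial_{\hinf I})$, and the surjection $Z\twoheadrightarrow M^{\oplus b}$ together with $\Ext^d_R(R/(\xx),Z)\cong Z/\xx Z$ and Nakayama gives $\id_RZ\ge\depth R$. The upper bound, however, has genuine gaps. The crux is the unproved ``converse inequality'' $\qid_RM\le\qid_{R/(x)}(M/xM)+1$: your induction cannot run without it (the inductive hypothesis plus the drop-by-one refinement only reproduce $\qid_RM\ge\depth R$), and there is no evident way to lift a bounded quasi-injective resolution of $M/xM$ over $R/(x)$ to one of $M$ over $R$ by ``adding a single layer''; for ordinary injective dimension the analogous statement for finitely generated modules is proved via Bass's characterization $\id_RM=\sup\{i:\Ext^i_R(k,M)\ne0\}$ together with the isomorphism $\Ext^{i+1}_R(k,M)\cong\Ext^i_{R/(x)}(k,M/xM)$, and no such Ext-characterization of $\qid$ is available. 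A second unresolved point is the step ``pick $x\in\m$ regular on both $R$ and $M$'': when $\depth_RM=0<\depth R$ (e.g.\ $M=k$ over a regular local ring of positive dimension, which does have finite quasi-injective dimension) no such $x$ exists, and the promised preliminary reduction is not supplied; the base case $\depth R=0$ (injectivity of $Z$) is likewise only conjectured. (The refinement $\qid_{R/(x)}(M/xM)\le\qid_RM-1$ of Proposition \ref{7}(2) that you invoke is fine, since $\Hom_R(R/(x),I_i)=0$ whenever $I_i=0$; that part is not the issue.)

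The paper proves the upper bound directly, with no induction and no change of rings. Choose $I$ with $\qid_RM=\hinf I-\inf I=:t$ and set $Z=\Ker(\partial_{\hinf I})$; minimality of $I$ gives $\id_RZ=t$ (otherwise splice a shorter injective resolution of $Z$ onto the truncation and contradict minimality). Lemma \ref{L31} shows that $\Ext^t_R(R/\p,Z)$ is finitely generated, so for $\p\ne\m$ and $x\in\m\setminus\p$ the surjection $\Ext^t_R(R/\p,Z)\xrightarrow{x}\Ext^t_R(R/\p,Z)\to0$ (coming from $\Ext^{t+1}_R(-,Z)=0$) plus Nakayama forces $\Ext^t_R(R/\p,Z)=0$; hence $\Ext^t_R(k,Z)\ne0$. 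Embedding $k\hookrightarrow R/(\xx)$ for a maximal regular sequence $\xx$ then yields $\Ext^t_R(R/(\xx),Z)\ne0$, and $\pd_RR/(\xx)=\depth R$ gives $t\le\depth R$. Note that the finite-generation statement of Lemma \ref{L31} is exactly what substitutes for the failure of $Z$ to be finitely generated, a point your proposal never engages with on this side of the argument; if you wish to salvage your induction, you would first have to prove the lifting inequality, which appears to be as hard as the theorem itself.
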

\begin{proof}
Since $\qid_RM<\infty$, there exists a quasi-injective resolution $I$ of $M$ such that $\qid_RM=\hinf I-\inf I$.
Set $s=\hinf I$, $t=\qid_RM$, and $d=\depth R$. Using the notation in Lemma \ref{L31} and by choice of $I$, we have $\id_RZ_s=t$. For a prime ideal $\p\neq \m$ of $R$ and $x\in \m \setminus \p$, consider the exact sequence $0\longrightarrow R/\p \overset{x} \longrightarrow R/\p$. This yields an exact sequence 
$$\Ext^t_R(R/\p,Z_s) \overset{x} \longrightarrow \Ext^t_R(R/\p,Z_s)\longrightarrow 0.$$ By Lemma \ref{L31} and Nakayama's Lemma, $\Ext^t_R(R/\p,Z_s)=0$. Since $\id_RZ_s=t$, we must have $\Ext^t_R(k,Z_s)\neq 0$; see \cite[Corollary 3.1.12]{BH}. Let $\xx=x_1,\dots,x_d$ be a maximal regular sequence in $R$. Then there exists an inclusion $0\to k \to R/(\xx)$. Hence we get a surjection $\Ext^t_R(R/(\xx),Z_s)\to \Ext^t_R(k,Z_s) \to 0$ which implies $\Ext^t_R(R/(\xx),Z_s)\neq 0$. Since $\pd_RR/(\xx)=d$, we get $t\leq d$.

Next, we show $t\geq d$. By the exact sequence $0\to B_{s+1}\to Z_s \to \H_s(I)\to 0$, we get the surjection $\Ext^d_R(R/(\xx),Z_s)\to \Ext^d_R(R/(\xx),\H_s(I))\to 0$. Since $\H_s(I)$ is nonzero and finitely generated, we have $\Ext^d_R(R/(\xx),\H_s(I)) \cong \H_0(\K(\xx,\H_s(I))) \cong \H_s(I)/\xx\H_s(I)\neq 0$; see \cite[Proposition 1.6.10]{BH}. Hence $\Ext^d_R(R/(\xx),Z_s)\neq 0$ and so that $t\geq d$.
\end{proof}

\begin{cor} Let $R$ be a local ring and let $M$ be a finitely generated $R$-module. Then one has $\qid_RM\leq \id_R M$ and if $\id_R M<\infty$, then equality holds.
\end{cor}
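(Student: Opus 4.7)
The inequality $\qid_R M \le \id_R M$ is essentially free: it has already been recorded as Remark \ref{8}(2). My plan is to simply point out that any injective resolution of $M$ is in particular a quasi-injective resolution (its only nonzero homology is $M$ in degree $0$, so the defining conditions $\H_i(I) \cong M^{\oplus b_i}$ with $b_0 = 1$ and $b_i = 0$ otherwise are trivially met), so the infimum defining $\qid_R M$ is taken over at least as large a family of complexes as the one defining $\id_R M$. For $M = 0$ both sides equal $-\infty$ by the convention in Definition \ref{12}, and the inequality is vacuous.

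For the equality when $\id_R M < \infty$ and $M \neq 0$, the argument is a direct combination of the first part with Theorem \ref{Bass} and the classical Bass formula. Concretely, the first inequality gives $\qid_R M \le \id_R M < \infty$, so Theorem \ref{Bass} applies and yields
\[
\qid_R M = \depth R.
\]
On the other hand, the classical Bass formula says that a nonzero finitely generated $R$-module of finite injective dimension satisfies $\id_R M = \depth R$. Combining these two identities gives $\qid_R M = \id_R M$, as required.

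There is no real obstacle here; the corollary is essentially bookkeeping. The only minor points to handle cleanly are (i) the trivial case $M = 0$, where both invariants equal $-\infty$ by convention, and (ii) making sure one is entitled to invoke Theorem \ref{Bass}, which requires $\qid_R M < \infty$—this is precisely what part (1) supplies from the hypothesis $\id_R M < \infty$.
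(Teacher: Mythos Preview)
Your argument is correct and is exactly the intended one: the paper states this corollary immediately after Theorem~\ref{Bass} without proof, relying on Remark~\ref{8}(2) for the inequality and on the combination of Theorem~\ref{Bass} with the classical Bass formula for the equality. There is nothing to add.
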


Next, we prove a result regarding the vanishing of Ext and local cohomology for modules of finite quasi-injective dimension.

\begin{prop}\label{main1}
Let $M,N$ be $R$-modules and assume $\qid_RN<\infty$. 
Let $I$ be a quasi-injective resolution of $N$ such that $\qid_RN=\hinf I - \inf I$.
Then the following hold.
\begin{enumerate}[\rm(1)]
\item If {$\Ext^i_R(M,N)=0$ for all $n\leq i \leq n-\inf I$} and some $n\geq 1$, then {$\Ext^i_R(M,N)=0$} for all $i\geq n$.
\item If {$\Ext^{\gg 0}_R(M,N)=0$}, then {$\Ext^i_R(M,N)=0$} for all $i>\qid_RN$.
\item For an ideal $\aa$ of $R$ one has $\H^i_{\aa}(N)=0$ for all $i>\qid_RN$.
\end{enumerate}
\end{prop}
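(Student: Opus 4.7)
The plan is to extract all three statements from the hyperhomology spectral sequence of Lemma~\ref{spseq}. After shifting $I$ via Remark~\ref{8}(3), assume $\sup I = \hsup I = 0$, set $s := -\inf I$, and $t := \qid_R N = \hinf I + s$. Writing $\H_q(I) \cong N^{\oplus b_q}$ with $b_q$ supported in $[t-s, 0]$ and $b_0 > 0$, the spectral sequence reads
$$\E^{p,q}_2 = \Ext^p_R(M,N)^{\oplus b_q} \Longrightarrow \H_{q-p}(\Hom_R(M,I)),$$
with differentials $d_r \colon \E^{p,q}_r \to \E^{p+r,\, q+r-1}_r$. The crucial input is that $\Hom_R(M,I)$ is concentrated in degrees $[-s,0]$, so the abutment vanishes in every total degree below $-s$.

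For (1), assume the stated interval of vanishing and let $i^* \ge n+s+1$ be the smallest $i \ge n$ with $\Ext^i_R(M,N) \ne 0$, aiming at a contradiction. I would focus on the top-row corner $\E^{i^*, 0}_2 \ne 0$. Outgoing $d_r$ exit into row $q = r-1 \ge 1$, outside the support. Incoming $d_r$ arrive from $\E^{i^*-r,\, 1-r}$: when $r$ is small enough that $1-r$ remains in the support band, the index $p = i^*-r$ lies in $[n, i^*-1]$, so its Ext factor vanishes by hypothesis and minimality; otherwise the source row is out of support. Hence $\E^{i^*, 0}_\infty = \E^{i^*, 0}_2 \ne 0$ survives to total degree $-i^* < -s$, contradicting the abutment.

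For (2), let $i^* > t$ be maximal with $\Ext^{i^*}_R(M,N) \ne 0$ and attack the bottom-row corner $\E^{i^*, t-s}_2$ instead. Outgoing $d_r$ go to $\E^{i^*+r,\, t-s+r-1}$, whose Ext factor vanishes by maximality, while incoming $d_r$ originate from $q = t-s-r+1 < t-s$, outside the support. Thus $\E^{i^*, t-s}_\infty \ne 0$ sits in total degree $(t-s) - i^* < -s$ (here using $i^* > t$), again contradicting the abutment. For (3), apply the same spectral sequence with $M = R/\aa^n$ and take the filtered colimit over $n$, which is exact, yielding
$$\E^{p,q}_2 = \H^p_\aa(N)^{\oplus b_q} \Longrightarrow \H_{q-p}(\Gamma_\aa(I));$$
the target remains supported in $[-s,0]$, and since $\H^i_\aa$ vanishes for large $i$, a maximal $i^* > t$ with $\H^{i^*}_\aa(N) \ne 0$ must exist if the conclusion fails, so the identical corner argument closes the proof.

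The main obstacle is verifying at the chosen corner that \emph{every} $d_r$ (not only $d_2$) is forced to vanish, so that $\E^{p,q}_\infty$ is genuinely the $E_2$ entry and not a proper subquotient. This amounts to checking both source and target of each $d_r$ against the support band $q \in [t-s, 0]$ together with the minimality/maximality assumption on $i^*$, which is precisely what dictates choosing the top row $q = 0$ for (1) and the bottom row $q = \hinf I$ for (2) and (3).
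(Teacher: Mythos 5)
Your proposal is correct, and for parts (1) and (2) it is essentially the paper's own argument: the same hyperext spectral sequence from Lemma \ref{spseq}, the same normalization $\sup I=\hsup I=0$ via Remark \ref{8}(3), and the same corner trick (top row $q=0$ for (1), bottom row $q=\hinf I$ for (2)), with your minimal-counterexample phrasing in (1) replacing the paper's step-by-step induction on $\Ext^{n-\inf I+1}_R(M,N)$. One point in your favor in (2): you take $i^*$ \emph{maximal} with $\Ext^{i^*}_R(M,N)\ne 0$, which is exactly what is needed to kill the outgoing differentials $d_r\colon \E^{i^*,\hinf I}_r\to \E^{i^*+r,\hinf I+r-1}_r$; the paper's text chooses the \emph{minimum} such index, for which this vanishing is not justified, so your version is the repaired (and surely intended, given the hypothesis $\Ext^{\gg 0}_R(M,N)=0$) form of that argument. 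Where you genuinely diverge is (3): the paper does not use the spectral sequence there, but instead dimension-shifts along the exact sequences $0\to Z_i\to I_i\to B_i\to 0$ and $0\to B_{i+1}\to Z_i\to \H_i(I)\to 0$, using $\id_R Z_{\hinf I}\le \qid_R N$ and descending induction starting from the Čech-type vanishing of $\H^i_{\aa}$ above the number of generators of $\aa$; you instead rerun the corner argument for the spectral sequence $\H^p_{\aa}(N)^{\oplus b_q}\Rightarrow \H_{q-p}(\Gamma_{\aa}(I))$. Your route is more uniform across the three parts, but it has one step you should make explicit: either justify that the filtered colimit over $m$ of the convergent spectral sequences for $\Hom_R(R/\aa^m,I)$ converges to the colimit abutment (fine here because all of them live in the fixed band $\hinf I\le q\le 0$, $p\ge 0$), or, more cleanly, invoke directly the hyper-derived functor spectral sequence of $\Gamma_{\aa}$ applied to the bounded-above complex of injectives $I$ (injective modules are $\Gamma_{\aa}$-acyclic), which gives the displayed $\E_2$-page without any colimit. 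The paper's argument for (3) is more elementary, at the cost of a separate induction; yours buys a single mechanism for all three statements.
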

\begin{proof}  Set $d=\qid_RN$, $s=\hinf I$, and $r=\inf I$. By using Lemma \ref{spseq}, there exists a fourth quadrant spectral sequence 
$$\E^{p,q}_2=\Ext^p_R(M,\H_q(I)) \Longrightarrow \H_{q-p}(\Hom_R(M,I)).$$
(1) By induction, it is enough to show $\Ext^{n-r+1}_R(M,N)=0$. Note that $\E^{p,q}_2=0$ for all $q<\hinf I$ and
the maps $d^{p,q}_l:\E^{p,q}_l \to \E^{p+l,q+l-1}_l$ are of bidegree $(l,l-1)$. By the assumptions, we see that no nonzero map arrives or goes out from $\E^{n-r+1,0}_2$. Thus $\E^{n-r+1,0}_2\cong \E^{n-r+1,0}_\infty$, and so it is a subquotient of $\H_{-n+r-1}(\Hom_R(M,I))$. Hence $\H_{-n+r-1}(\Hom_R(M,I))=0$ as $-n+r-1<r=\inf I$.

(2) Let $j\geq 1$ be the minimum integer such that $\Ext^j_R(M,N)\neq 0$. Then we have $\E^{j,s}_2\cong \Ext^j_R(M,\H_s(I))\neq 0$ and since $d^{j,s}_l$ has bidegree $(l,l-1)$ for all $l\geq 2$, one has $\E^{j,s}_2\cong \E^{j,s}_\infty$. Therefore $\E^{j,s}_2$ is a subquotient of $\H_{s-j}(I)$. Thus we must have $s-j\geq r$ and so $j\leq s-r=\qid_R N$.

(3) Suppose $\aa$ can be generated by $t$ elements. If $d\geq t$, then we have nothing to prove; see \cite[Theorem 3.3.1]{BS}. Assume $d < t$. Let $d<j\leq t$ and suppose $\H^i_{\aa}(N)=0$ for all $i>j$. By using the notation in Lemma \ref{L31}, we have an exact sequence $0\to B_{s+1}\to Z_s \to \H_s(I) \to 0$ such that $\id_R{Z_s} = d$. By applying $\Gamma_{\aa}(-)$ we get an exact sequence 
$$\H^j_{\aa}(Z_s) \to \H^j_{\aa}(\H_s(I))\to \H^{j+1}_{\aa}(B_{s+1})\to \H^{j+1}_{\aa}(Z_s).$$
As $\id_RZ_s < j$, we have $\H^j_{\aa}(Z_s)=0=\H^{j+1}_{\aa}(Z_s)$ and so $\H^j_{\aa}(\H_s(I))\cong \H^{j+1}_{\aa}(B_{s+1})$. Next by using the exact sequence $$0\to Z_{s+1} \to I_{s+1} \to B_{s+1}\to 0,$$ we have $\H^{j+1}_{\aa}(B_{s+1})\cong \H^{j+2}_{\aa}(Z_{s+1})$, and by using the exact sequence $$0\to B_{s+2}\to Z_{s+1} \to \H_{s+1}(I) \to 0,$$ we have $\H^{j+2}_{\aa}(B_{s+2})\cong \H^{j+2}_{\aa}(Z_{s+1})$. By continuing this way, we eventually get $$\H^j_{\aa}(\H_s(I))\cong \H^{j+1-s}_{\aa}(Z_0)\cong \H^{j+1-s}_{\aa}(\H_0(I)).$$
Since $s\leq 0$ and $\H_s(I)$, $\H_0(I)$ are isomorphic to finite direct sums of copies of $N$, last isomorphisms show that $\H^j_{\aa}(N)=0$.
\end{proof}

Putting Theorem \ref{Bass} together with Proposition \ref{main1}(2), we have the following corollary.

\begin{cor} \label{C37} Let $R$ be a local ring and let $M$ and $N$ be $R$-modules such that $N$ is finitely generated and $\qid_RN<\infty$. If $\Ext^{\gg0}_R(M,N)=0$, then $\Ext^i_R(M,N)=0$ for all $i>\depth R.$
\end{cor}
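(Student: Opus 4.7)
The plan is to prove this as a direct combination of the two cited results, Theorem \ref{Bass} and Proposition \ref{main1}(2). The statement actually contains all the ingredients: we have a finitely generated module $N$ with $\qid_R N < \infty$, and a vanishing hypothesis $\Ext^{\gg 0}_R(M,N) = 0$. The goal $i > \depth R$ is just the goal $i > \qid_R N$ in disguise, via the Bass-type formula.

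First I would dispose of the trivial case $N = 0$, where every $\Ext^i_R(M,N)$ vanishes for tautological reasons (and both sides of the desired conclusion are satisfied). So from now on assume $N \ne 0$.

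Next, I would apply Proposition \ref{main1}(2) directly to the pair $(M,N)$: the hypothesis $\qid_R N < \infty$ is given, and the hypothesis $\Ext^{\gg 0}_R(M,N) = 0$ is part of our assumption. The conclusion yields
\[
\Ext^i_R(M,N) = 0 \qquad \text{for all } i > \qid_R N.
\]
Finally, since $N$ is a nonzero finitely generated $R$-module of finite quasi-injective dimension, Theorem \ref{Bass} gives $\qid_R N = \depth R$. Substituting this identity into the vanishing range above produces exactly the desired conclusion: $\Ext^i_R(M,N) = 0$ for all $i > \depth R$.

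There is no real obstacle here; the corollary is essentially a bookkeeping step that records the consequence of reading Proposition \ref{main1}(2) through the lens of the Bass formula of Theorem \ref{Bass}. The only minor point worth noting in the write-up is that Proposition \ref{main1}(2) is stated without a finiteness hypothesis on $N$, whereas Theorem \ref{Bass} needs $N$ finitely generated and nonzero in order to give the equality with $\depth R$; both conditions are assumed here, so the chain of implications goes through without further comment.
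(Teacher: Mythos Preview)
Your proof is correct and matches the paper's approach exactly: the paper simply states that the corollary follows by combining Theorem~\ref{Bass} with Proposition~\ref{main1}(2), which is precisely the argument you have written out, including the minor observation that one needs $N\neq 0$ to invoke Theorem~\ref{Bass}.
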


Let $R$ be a ring, $\aa$ an ideal of $R$, and let $M$ be an $R$-module. In \cite{Saz}, it is shown that the $i$th local cohomology module $\H^i_{\aa}(M)$ is obtained by applying $\Gamma_{\aa}(-)$ to a Gorenstein-injective resolution of $M$. In light of this result, our next corollary shows that $\H^i_{\aa}(M)$ also can be calculated by using a resolution of $M$ consisting of modules with quasi-injective dimension zero.

\begin{cor}
Let $R$ be a ring and let $\aa$ be an ideal of $R$. Let $M$ be an $R$-module and suppose there exists an exact sequence of $R$-modules
$$0\to M \to X_{0} \to X_{-1} \to \cdots,$$
where $\qid_RX_j=0$ for all $j$. Let $X=(0\to  X_{0} \to X_{-1} \to \cdots)$. Then one has $\H^i_{\aa}(M)\cong \H_{-i}(\Gamma_{\aa}(X))$ for all $i\geq 0$.
\end{cor}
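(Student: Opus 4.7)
The plan is to recognize $X$ as a $\Gamma_{\aa}$-acyclic resolution of $M$ and then appeal to the standard principle that right derived functors can be computed from any such resolution. The crucial input is Proposition \ref{main1}(3): applied to each module $X_{-j}$ (which satisfies $\qid_R X_{-j}=0$), it yields $\H^i_{\aa}(X_{-j})=0$ for every $i>0$. In other words, every $X_{-j}$ is $\Gamma_{\aa}$-acyclic, and the result should then follow by a routine dimension-shift.

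To make this explicit I would set $M_0:=M$ and, for $j\ge 1$, $M_{-j}:=\Ker(X_{-j}\to X_{-j-1})$; exactness of the given sequence identifies $M_{-j}$ with the image of $X_{-j+1}\to X_{-j}$, so we obtain short exact sequences
$$0\to M_{-j}\to X_{-j}\to M_{-j-1}\to 0$$
for each $j\ge 0$. Applying $\Gamma_{\aa}(-)$ and invoking the acyclicity established above, the long exact sequence of local cohomology collapses into the four-term exact sequence
$$0\to \Gamma_{\aa}(M_{-j})\to \Gamma_{\aa}(X_{-j})\to \Gamma_{\aa}(M_{-j-1})\to \H^1_{\aa}(M_{-j})\to 0$$
together with isomorphisms $\H^{i+1}_{\aa}(M_{-j})\cong \H^{i}_{\aa}(M_{-j-1})$ for $i\ge 1$. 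Iterating these isomorphisms gives $\H^i_{\aa}(M)\cong \H^1_{\aa}(M_{-i+1})$ for $i\ge 1$. I would then identify $\Gamma_{\aa}(M_{-i})$ with the module of $i$-cocycles of $\Gamma_{\aa}(X)$, using left-exactness of $\Gamma_{\aa}$ applied to the embedding $M_{-i}\hookrightarrow X_{-i}$ and the fact that the composition $M_{-i}\to X_{-i}\to X_{-i-1}$ vanishes; combining this with the four-term sequence above yields $\H^1_{\aa}(M_{-i+1})\cong \H_{-i}(\Gamma_{\aa}(X))$. The case $i=0$ is immediate from left-exactness of $\Gamma_{\aa}$ applied to the first terms of $X$.

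I do not anticipate serious obstacles here: the substance of the corollary is already contained in Proposition \ref{main1}(3), and what remains is a standard acyclic-resolution argument. The only care needed is bookkeeping when translating between the cohomological indexing $\H^i_{\aa}$ for local cohomology and the homological subscript convention the paper adopts for complexes, so that the final answer lands in $\H_{-i}(\Gamma_{\aa}(X))$ rather than in $\H^i$.
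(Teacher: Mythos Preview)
Your proposal is correct and follows essentially the same approach as the paper: the paper observes via Proposition~\ref{main1}(3) that each $X_{-j}$ is $\Gamma_{\aa}$-acyclic and then simply cites \cite[Exercise~4.1.2]{BS} for the acyclic-resolution principle, whereas you spell out that principle by hand via the standard dimension shift. The substance is identical; you have merely unpacked the reference.
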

\begin{proof}
Note that for an $R$-module $Y$ with $\qid_RY=0$, one has $\H^i_{\aa}(Y)=0$ for all $i>0$, by Proposition \ref{main1}(3). Therefore the claim follows by \cite[Exercise 4.1.2]{BS}.
\end{proof}

\begin{thm}\label{dim}
Let $(R,\m)$ be a local ring and let $M$ be a finitely generated $R$-module. If $\qid_RM<\infty$, then $\dim_RM\leq \depth R$.
\end{thm}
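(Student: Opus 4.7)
The plan is to read off the result directly from local cohomology vanishing. By Theorem \ref{Bass} we already know $\qid_R M = \depth R$, so it suffices to prove $\dim_R M \le \qid_R M$. To do this I want to pinch $\dim_R M$ between two local cohomology bounds: one forcing vanishing of $\H^i_\m(M)$ above $\qid_R M$, and one guaranteeing non-vanishing at the top degree $\dim_R M$.

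First I would invoke Proposition \ref{main1}(3) with $N = M$ and $\aa = \m$. That proposition gives $\H^i_\m(M) = 0$ for every $i > \qid_R M$, so in particular any degree in which $\H^i_\m(M)$ is nonzero is at most $\qid_R M$. Next I would invoke Grothendieck's non-vanishing theorem for a finitely generated module over a Noetherian local ring (see, e.g., \cite[Theorem 6.1.4]{BS}), which says $\H^{\dim_R M}_\m(M) \neq 0$.

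Putting these two ingredients together yields $\dim_R M \le \qid_R M$, and combining with Theorem \ref{Bass} gives $\dim_R M \le \qid_R M = \depth R$, as desired.

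There is no serious obstacle: the only work is recognizing that Proposition \ref{main1}(3) is exactly the replacement for the classical fact $\H^i_\m(M) = 0$ for $i > \id_R M$, so the standard injective-dimension argument transfers verbatim once $\id_R M$ is swapped for $\qid_R M$.
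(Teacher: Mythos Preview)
Your proposal is correct and follows essentially the same route as the paper: invoke Theorem \ref{Bass} to identify $\qid_R M=\depth R$, apply Proposition \ref{main1}(3) with $\aa=\m$ to kill $\H^i_\m(M)$ for $i>\qid_R M$, and finish with Grothendieck's non-vanishing theorem. The only cosmetic difference is that the paper explicitly disposes of the trivial case $M=0$ before citing Theorem \ref{Bass} (which is stated for nonzero $M$); you should add that one-line caveat.
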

\begin{proof}
If $M=0$, then we have nothing to prove, so assume $M$ is nonzero. Then by Theorem \ref{Bass}, we have $\qid_RM=\depth R$. Thus by Proposition \ref{main1}(3), $\H^i_\m(M)=0$ for all $i>\depth R$. By applying Grothendieck's Non-Vanishing Theorem, we are done; see for example, \cite[Theorem 3.5.7]{BH}.
\end{proof}

The following corollary is an immediate consequence of Theorems \ref{Bass} and \ref{dim}.

\begin{cor}\label{35}
Let $(R,\m)$ be a local ring. If there exists a finiteley generated $R$-module $M$ such that $\dim_RM=\dim R$ and $\qid_RM<\infty$, then $R$ is Cohen-Macaulay. In particular if $\qid_R\m<\infty$, then $R$ is Cohen-Macaulay.
\end{cor}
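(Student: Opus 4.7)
The plan is to chain Theorem \ref{Bass} with Theorem \ref{dim}. For the main statement, I may assume $M\neq 0$ (otherwise the hypothesis $\dim_R M=\dim R$ forces $R$ to be zero-dimensional and there is nothing to prove). Theorem \ref{Bass} then gives $\qid_R M=\depth R$, and Theorem \ref{dim} gives $\dim_R M\le\depth R$. Substituting the hypothesis $\dim_R M=\dim R$ into the latter yields $\dim R\le\depth R$. Since the reverse inequality $\depth R\le\dim R$ is automatic, we conclude $\depth R=\dim R$, i.e., $R$ is Cohen-Macaulay.

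For the ``in particular'' part, I would simply apply the first statement to $M=\m$. If $\m=0$ then $R$ is a field and the conclusion is trivial, so assume $\m\neq 0$; then the only thing needing verification is $\dim_R\m=\dim R$. I would establish this by showing $\Supp_R\m=\spec R$: for every prime $\p$, either $\p=\m$, in which case $\m_\p=\m\neq 0$, or $\p\subsetneq\m$, in which case any $x\in\m\setminus\p$ becomes a unit in $R_\p$ so that $\m R_\p=R_\p\neq 0$. Consequently $\dim_R\m=\max\{\dim R/\p:\p\in\Supp_R\m\}=\dim R$, and the first part applies.

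Since both Theorem \ref{Bass} and Theorem \ref{dim} are already in hand, I do not expect any genuine obstacle. The only mild subtlety is the $\m=0$ edge case and the support-theoretic bookkeeping needed to identify $\dim_R\m$ with $\dim R$ when $\m\neq 0$; both are routine.
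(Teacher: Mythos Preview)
Your argument is correct and matches the paper's own, which simply declares the corollary an immediate consequence of Theorems~\ref{Bass} and~\ref{dim}; you have merely filled in the routine details (including the $\dim_R\m=\dim R$ verification for the ``in particular'' clause). Note that your invocation of Theorem~\ref{Bass} is in fact redundant, since Theorem~\ref{dim} already delivers $\dim_R M\le\depth R$ directly---but the paper cites both as well, so you are in good company.
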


 We remark that the finiteness of $\qid_R\m$ does not necessarily imply that $R$ is Gorenstein. For example, let $(R,\m)$ be a local ring such that $\m^2=0\neq \m$. Then $\m$ is isomorphic to a finite direct sum of copies of $k=R/\m$. Therefore $\qid_R\m<\infty$, but $R$ is not necessarily Gorenstein. In general, since $\m_{\p}\cong R_{\p}$ for all $\p \in \spec(R) \setminus \{\m \}$, by using Proposition \ref{7}(1) and Corollary \ref{C42}, we see that if $\qid_R\m<\infty$, then $R_\p$ is Gorenstein for all $\p \in \spec(R) \setminus \{\m \}$.

The example below shows that the inequality in Proposition \ref{7}(2) is indeed strict. 

\begin{exam}
Let $k$ be a field and $R=k[[x,y,z]]/(y^2,yz,z^2)$. Then $R$ is a Cohen-Macaulay local ring of dimension one. Let $\p=(y,z)$ and $\m=(x,y,z)$. Since $R_\p$ is not Gorenstein, we have $\qid_R\m=\infty$, by the last argument. One checks $x$ is a nonzero-divisor on both $R$ and $\m$, and $\m/x\m \cong k^3$. Hence $\qid_{R/(x)}\m/x\m < \infty$.  
\end{exam}
\section{Generalization of a theorem of Foxby}

In this section, we generalize a Theorem of Foxby \cite{Fox}, which states that if a local ring admits a nonzero finitely generated module with both finite injective and projective dimensions, then the ring is Gorenstein. First, we prove the following more general result.

\begin{thm}\label{Foxby} Let $(R,\m,k)$ be a local ring and let $M$ be a nonzero finitely generated $R$-module of finite projective dimension. Then the following statements are equivalent. 
\begin{enumerate}[\rm(1)]
    \item $R$ is Gorenstein.
    \item There exists an $R$-module $N$ of finite injective dimension and a surjective map $N \twoheadrightarrow M$.
\end{enumerate}
\end{thm}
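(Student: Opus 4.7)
The direction $(1) \Rightarrow (2)$ is immediate: if $R$ is Gorenstein, then $\id_R R < \infty$, so any surjection $R^n \twoheadrightarrow M$ (which exists since $M$ is finitely generated) realizes $N := R^n$ as a module of finite injective dimension satisfying (2).

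For $(2) \Rightarrow (1)$, the plan is to induct on $p := \pd_R M$. When $p = 0$, the module $M$ is finitely generated projective over the local ring $R$, hence free: $M \cong R^s$ with $s \geq 1$. The surjection $N \twoheadrightarrow R^s$ splits because $R^s$ is projective, so $R$ is a direct summand of $N$; consequently $\id_R R \leq \id_R N < \infty$, and $R$ is Gorenstein.

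For the inductive step $p \geq 1$, choose a finitely generated free $R$-module $F$ with a surjection $\pi \colon F \twoheadrightarrow M$ and kernel $\syz M$, so that $\pd_R \syz M = p - 1$. The aim is to construct an $R$-module $N'$ with $\id_R N' < \infty$ admitting a surjection $N' \twoheadrightarrow \syz M$; applying the inductive hypothesis to the pair $(\syz M, N')$ then yields that $R$ is Gorenstein. Since $F$ is projective, the map $\pi$ lifts through the given surjection $\phi \colon N \twoheadrightarrow M$ to a map $\tilde{\pi} \colon F \to N$ satisfying $\phi \tilde{\pi} = \pi$, and this lift, combined with the two short exact sequences $0 \to K \to N \to M \to 0$ (where $K := \ker\phi$) and $0 \to \syz M \to F \to M \to 0$, should feed into a suitable pullback or pushout construction producing $N'$.

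The main obstacle is executing the construction so that $N'$ both surjects onto $\syz M$ and has finite injective dimension. The direct pullback $P := F \times_M N \cong F \oplus K$ fits into a short exact sequence $0 \to \syz M \to P \to N \to 0$, which exhibits $\syz M$ as a submodule rather than a quotient of $P$; moreover, bounding $\id_R P$ naturally involves $\id_R F = \id_R R$, which is precisely the quantity we wish to prove finite. Circumventing this circularity---for instance by combining the pullback with a bounded injective resolution of $N$, or by pushing out along an injective envelope of $\syz M$ so as to absorb the free summands---is the technical heart of the proof.
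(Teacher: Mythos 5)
Your direction $(1)\Rightarrow(2)$ and your base case $p=0$ are fine, but the inductive step is a genuine gap, and it is the whole content of the theorem. You never produce the module $N'$ of finite injective dimension surjecting onto $\syz M$, and the obstacle you identify is not merely technical: the pullback $F\times_M N\cong F\oplus K$ exhibits $\syz M$ only as a submodule, pushing out $0\to\syz M\to F\to M\to 0$ along $\syz M\hookrightarrow \E(\syz M)$ again yields $M$ (not $\syz M$) as the quotient, and any bound on the injective dimension of a module containing the free summand $F$ requires $\id_RR<\infty$, which is exactly the conclusion. Note also that hypothesis (2) is not inherited by syzygies in any evident way --- producing a finite-injective-dimension surjection onto $\syz M$ is essentially as strong as the theorem itself (it follows trivially once $R$ is known to be Gorenstein, but nothing weaker obviously gives it) --- and the asymmetry of the statement (the paper remarks that the dual version, with an injection into a module of finite projective dimension, is false) warns that no formal dualization or positional fix will rescue the construction.

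The paper's proof avoids induction entirely and is numerical. Take a \emph{minimal} free cover $F\twoheadrightarrow M$ and lift it through $N\twoheadrightarrow M$ (possible since $F$ is free); because the lift factors through $N$ and $\Ext^i_R(k,N)=0$ for $i>\id_RN$, the induced maps $\Ext^i_R(k,F)\to\Ext^i_R(k,M)$ vanish for $i\gg0$. The long exact sequence of $\Ext_R(k,-)$ applied to $0\to\syz M\to F\to M\to 0$ then splits into short exact sequences, giving $\mu^n_R(F)=\mu^n_R(\syz M)-\mu^{n-1}_R(M)$ for $n\gg0$. Since $\pd_RM<\infty$, the Avramov--Veliche isomorphism $\Ext^n_R(k,M)\cong\bigoplus_{i-j=n}\Ext^i_R(k,R)\otimes_k\Tor^R_j(k,M)$ (and likewise for $\syz M$), together with $\beta_i(M)=\beta_{i-1}(\syz M)$ from minimality, collapses the right-hand side to $-\mu^{n-1}_R(R)\beta_0(M)\le 0$; hence $\mu^n_R(R)=0$ for $n\gg0$ and $R$ is Gorenstein. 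If you want to salvage your write-up, this Bass-number computation (or some equally strong input about $\Ext^*_R(k,-)$ for modules of finite projective dimension) is the missing ingredient; the syzygy induction as proposed cannot be completed.
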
 
\begin{proof} (1)$\Rightarrow$(2) This is clear based on the fact that if $R$ is Gorenstein, then injective dimension of $R$ is finite.

(2)$\Rightarrow$(1) Let $F$ be a minimal free cover of $M$. Then there exists a commutative diagram 
\begin{center}
    \begin{tikzpicture}[baseline=(current  bounding  box.center)]
 \matrix (m) [matrix of math nodes,row sep=3em,column sep=4em,minimum width=2em] {
N&M\\
&F\\};
 \path[->>] (m-1-1) edge node[above]{} (m-1-2);
 \path[<<-] (m-1-2) edge node[right]{} (m-2-2);
 \path[<-] (m-1-1) edge node[below]{} (m-2-2);
 \end{tikzpicture}.
\end{center} 
 It follows that the induced map $\Ext^i_R(k,F)\to \Ext^i_R(k,M)$ is zero for all $i>\id_RN$.
Then by using the exact sequence $0\to \Omega M \to F \to M \to 0$, we obtain an exact sequence 
$$0\to \Ext^{n-1}_R(k,M) \to \Ext^n_R(k,\Omega M)\to \Ext^n_R(k,F)\to 0,$$
for all $n\gg 0$. This gives an equality of Bass numbers $$\mu^n_R(F)=\mu^n_R(\Omega M)-\mu^{n-1}_R(M),$$
for all $n\gg0$.
Since $\pd_RM<\infty$, by \cite[Proposition 2.2 and Theorem 6.1]{AVE} there exist isomorphisms of $k$-vector spaces
$$\Ext^{n-1}(k,M)\cong \bigoplus_{i-j=n-1}\Ext^i(k,R)\otimes_k\Tor^R_j(k,M)$$
and 
$$\Ext^n(k,\Omega M)\cong \bigoplus_{i-j=n}\Ext^i(k,R)\otimes_k\Tor^R_j(k,\Omega M).$$
Set $d=\pd_RM$. The last isomorphisms give the following equations:
$$\mu^{n-1}_R(M)=\mu^{n-1}_R(R)\beta_0(M)+\mu^n_R(R)\beta_1(M)+\cdots+
\mu^{n-1+d}_R(R)\beta_d(M)$$and
$$\mu^n_R(\Omega M)=\mu^n_R(R)\beta_0(\Omega M)+\mu^{n+1}_R(R)\beta_1(\Omega M)+\cdots+\mu^{n+d-1}_R(R)\beta_{d-1}(\Omega M).$$
Note that $\pd_R\Omega M=d-1$ and $\beta_i(M)=\beta_{i-1}(\Omega M)$. Therefore for all $n\gg 0$, we have
$$\mu^n_R(F)=\mu^n_R(\Omega M)-\mu^{n-1}_R(M)=-\mu^{n-1}_R(R)\beta_0(M) \leq 0,$$ This implies that $\mu^n_R(R)=0$ for all $n\gg 0$ and therefore $R$ is Gorenstein.
\end{proof}

\begin{rem}
The dual version of Theorem \ref{Foxby} is not true in general. That is, if there exists a finitely generated $R$-module $M$ of finite injective dimension and an injection $M\hookrightarrow N$ with $\pd_RN<\infty$, then $R$ is not necessarily Gorenstein. For example, if $R$ is a Cohen-Macaulay non-Gorenstein domain with a dualizing module $\omega_R$, then it is known that $\omega_R$ is an ideal of $R$ so there is an injection $\omega_R \hookrightarrow R$; see \cite[Proposition 3.3.18]{BH}. 
\end{rem}

\begin{cor}\label{C42}
Let $R$ be a local ring and let $M$ be a nonzero finitely generated $R$-module. Assume $\pd_RM<\infty$ and $\qid_RM<\infty$. Then $R$ is Gorenstein.
\end{cor}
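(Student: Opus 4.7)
The plan is to reduce this corollary to Theorem~\ref{Foxby}. Since $\pd_R M < \infty$ is already given, it suffices to exhibit an $R$-module $N$ with $\id_R N < \infty$ together with a surjection $N \twoheadrightarrow M$; invoking the implication (2)$\Rightarrow$(1) of Theorem~\ref{Foxby} will then force $R$ to be Gorenstein. The whole task is to construct such an $N$ out of the bounded quasi-injective resolution supplied by the hypothesis $\qid_R M < \infty$.

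Concretely, I would fix a bounded quasi-injective resolution $I$ of $M$ realizing $\qid_R M = \hinf I - \inf I$ and, by Remark~\ref{8}(3), normalize it so that $\sup I = \hsup I = 0$. Set $s = \hinf I$ and $Z_s = \ker \partial_s \subseteq I_s$. Because $I$ has no homology in degrees strictly below $s$, the truncation
$$0 \to Z_s \to I_s \to I_{s-1} \to \cdots \to I_{\inf I} \to 0$$
is exact and therefore an honest injective resolution of $Z_s$ of length $s - \inf I = \qid_R M$. In particular $\id_R Z_s \le \qid_R M < \infty$.

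To produce the needed surjection, I would read off the bottom surviving homology: by definition of $s$, $\H_s(I) \cong M^{\oplus b_s}$ for some $b_s \ge 1$, and the canonical map $Z_s \twoheadrightarrow Z_s / \Im \partial_{s+1} = \H_s(I) \cong M^{\oplus b_s}$ is surjective. Composing with any coordinate projection onto a single summand yields a surjection $Z_s \twoheadrightarrow M$. Taking $N = Z_s$, Theorem~\ref{Foxby} then concludes that $R$ is Gorenstein.

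I do not expect a genuine obstacle here. The only real content is the observation that truncating a length-optimal quasi-injective resolution below $\hinf I$ produces a bona fide injective resolution of the cycle module $Z_s$ in that degree, and this is immediate from the definition of $\hinf I$; the remainder of the argument is a formal manipulation inside the chosen resolution, followed by one application of Theorem~\ref{Foxby}.
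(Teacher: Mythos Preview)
Your proof is correct and follows essentially the same route as the paper: both pick a bounded quasi-injective resolution $I$, set $s=\hinf I$, note that $Z_s=\ker\partial_s$ has finite injective dimension, and feed the canonical surjection $Z_s\twoheadrightarrow \H_s(I)$ into Theorem~\ref{Foxby}. The only cosmetic difference is that the paper applies Theorem~\ref{Foxby} directly to $\H_s(I)\cong M^{\oplus b_s}$ (which also has finite projective dimension), whereas you first project onto a single summand of $M$.
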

\begin{proof} Let $I$ be a bounded quasi-injective resolution of $M$, and let $s=\hinf I$. Then by using the notation in Lemma \ref{L31}, $\id_RZ_s<\infty$. Since there is a surjection $Z_s \twoheadrightarrow \H_s(I)$ and $\H_s(I)$ is isomorphic to the finite direct sum of copies of $M$, by using Theorem \ref{Foxby}, $R$ is Gorenstein.
\end{proof}

\begin{rem} It is worth pointing out that if injective and quasi-projective dimensions of a nonzero finitely generated $R$-module  $M$ both are finite, then $R$ is Gorenstein; see \cite[Proposition 3.11]{GJT} and \cite[Theorem 5.3]{DGI}.
\end{rem}

A Gorenstein local ring $R$ is called an {\em AB ring} if for every finitely generated modules $M$ and $N$ whenever $\Ext^{\gg 0}_R(M,N)=0$ then $\Ext^i_R(M,N)=0$ for all $i>\depth R$. 

\begin{cor}
Let $R$ be a local ring. If quasi-injective dimension of every finitely generated $R$-module is finite, then $R$ is an AB ring.
\end{cor}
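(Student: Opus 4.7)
The statement follows almost immediately from two corollaries already established in the excerpt, so the plan is essentially to identify which hypotheses feed into each.

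First I would verify that $R$ is Gorenstein. Applying the hypothesis to $M = R$, we get $\qid_R R < \infty$. Since also $\pd_R R = 0 < \infty$, Corollary \ref{C42} applies directly and yields that $R$ is Gorenstein. This takes care of the standing hypothesis in the definition of an AB ring.

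Next I would verify the Ext-vanishing condition. Let $M, N$ be any two finitely generated $R$-modules with $\Ext^{\gg 0}_R(M, N) = 0$. By the hypothesis of the corollary, $\qid_R N < \infty$. Then Corollary \ref{C37} is exactly tailored to this situation: with $N$ finitely generated and of finite quasi-injective dimension, the assumption $\Ext^{\gg 0}_R(M, N) = 0$ forces $\Ext^i_R(M, N) = 0$ for all $i > \depth R$. This is precisely the defining property of an AB ring.

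There is no real obstacle here; the work has been done in Corollaries \ref{C42} and \ref{C37}, and this result is simply the combination of the two. The only thing to be careful about is that both cited corollaries require $N$ (respectively $M$) to be \emph{finitely generated} with \emph{finite} quasi-injective dimension, which is granted hypothetically for every finitely generated module in the statement.
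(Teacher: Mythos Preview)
Your proof is correct and follows exactly the paper's approach, which simply cites Corollaries~\ref{C37} and~\ref{C42}. You have correctly identified how each corollary is applied: \ref{C42} with $M=R$ to obtain Gorensteinness, and \ref{C37} to obtain the required Ext-vanishing bound.
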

\begin{proof}
It follows from Corollaries \ref{C37} and \ref{C42}.
\end{proof}
By using Theorem \ref{main5} and \cite[Example 6.6]{GJT}, there exists an AB ring $R$ and a finitely generated $R$-module $M$ such that $\qid_RM=\infty$.

The following is a dual version of \cite[Theorem 6.20]{GJT} in the sense of quasi-injective dimension.
\begin{thm}
Let $R$ be a ring and let $M$ be an $R$-module of finite quasi-injective dimension. If $\Ext^{i>0}_R(M,M)=0$ then $\id_RM<\infty$. 
\end{thm}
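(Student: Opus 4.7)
The plan is to extract an injective resolution of $M$ as a direct summand of a bounded quasi-injective resolution, exploiting the hypothesis $\Ext^{\ge 1}_R(M,M)=0$ to split off the lowest-degree homology. Let $I$ be a bounded quasi-injective resolution of $M$, normalized so that $\sup I = \hsup I = 0$ as in Remark~\ref{8}(3). Set $s = \hinf I$ and $r = \inf I$, and use the notation $Z_i$, $B_i$ from Lemma~\ref{L31}. The first observation is that, since $\H_i(I)=0$ for all $i<s$, the complex
$$0 \to Z_s \to I_s \to I_{s-1} \to \cdots \to I_r \to 0$$
is exact and consists of injective modules, so it is an injective resolution of $Z_s$. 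In particular $\id_R Z_s \le s - r < \infty$.

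The key step is to show that $\Ext^i_R(M, B_j) = 0$ for every $i \ge 1$ and every $j \in \Z$. Combining the short exact sequences $0 \to Z_j \to I_j \to B_j \to 0$ (with $I_j$ injective) and $0 \to B_{j+1} \to Z_j \to M^{\oplus b_j} \to 0$ from Lemma~\ref{L31} with the vanishing $\Ext^{\ge 1}_R(M,M)=0$, applying $\Hom_R(M,-)$ and dimension-shifting yields the isomorphisms
$$\Ext^i_R(M, B_j) \cong \Ext^{i+1}_R(M, Z_j) \cong \Ext^{i+1}_R(M, B_{j+1})$$
for $i \ge 1$. Iterating gives $\Ext^i_R(M, B_j) \cong \Ext^{i+k}_R(M, B_{j+k})$ for every $k \ge 0$. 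Taking $k$ large enough that $j + k > 0$, so that $I_{j+k} = 0$ and hence $B_{j+k} = 0$, produces the claimed vanishing.

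To finish, apply this with $j = s + 1$: since $\Ext^1_R(M^{\oplus b_s}, B_{s+1}) = \Ext^1_R(M, B_{s+1})^{b_s} = 0$, the sequence $0 \to B_{s+1} \to Z_s \to M^{\oplus b_s} \to 0$ splits, so $Z_s \cong M^{\oplus b_s} \oplus B_{s+1}$. Because $b_s \ge 1$ by definition of $\hinf I$, $M$ is a direct summand of $Z_s$, whence $\id_R M \le \id_R Z_s < \infty$. The main technical hurdle is the dimension-shifting in the second paragraph: the two families of short exact sequences from Lemma~\ref{L31} must be chained correctly, and one must verify that the iteration terminates, which uses the upper boundedness of $I$.
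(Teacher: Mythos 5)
Your proposal is correct and follows essentially the same route as the paper: you use the short exact sequences $0\to Z_j\to I_j\to B_j\to 0$ and $0\to B_{j+1}\to Z_j\to \H_j(I)\to 0$ together with $\Ext^{>0}_R(M,M)=0$ to get $\Ext^{>0}_R(M,B_j)=0$, then split $0\to B_{s+1}\to Z_s\to \H_s(I)\to 0$ and conclude $\id_RM\le\id_RZ_s<\infty$. The only (immaterial) difference is that you chain the dimension-shifting isomorphisms upward until $B_{j+k}=0$, whereas the paper runs the same vanishing argument as a downward induction starting from $Z_0\cong M^{\oplus b_0}$.
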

\begin{proof}
Let $I$ be a bounded quasi-injective resolution of $M$, and adopt the notation in Lemma \ref{L31}. First, we show that $\Ext^{i>0}_R(M,B_j)=0$ and $\Ext^{i>0}_R(M,Z_j)=0$ for all $j\geq 0$. 
By using the exact sequence 
$0\to Z_0 \to I_0 \to B_0 \to 0$ and noting that $Z_0\cong \oplus^{b_0}M$ for some positive integer $b_0$, we see that $\Ext^{i>0}(M,B_0)=0$.
Next assume $j>0$ and consider the exact sequence $0\to B_{j+1}\to Z_j \to \H_j(I) \to 0$. It follows by induction and assumptions that $\Ext^{i>0}_R(M,Z_j)=0$. Finally, the exact sequence $0\to Z_j \to I_j \to B_j \to 0$ shows that $\Ext^{i>0}_R(M,B_j)=0$. 

Now let $s=\hinf I$ and consider the exact sequence $$0 \to B_{s+1} \to Z_s \to \H_s(I) \to 0.$$
Since $\H_s(I)\cong \oplus^{b_s}M$ for some $b_s>0$ and $\Ext^1_R(M,B_{s+1})=0$, the last exact sequence splits. Since $\id_RZ_s<\infty$, one has $\id_R\H_s(I)<\infty$. Thus, $\id_RM<\infty$.
\end{proof}

A finitely generated $R$-module $C$ is called {\em semidualizing} if the homothety map $R\to \rhom_R(C,C)$ is a quasi-isomorphism.

\begin{cor}
Let $R$ be a local ring and let $C$ be a semidualizing $R$-module. If $\qid_RC<\infty$ then $C$ is the dualizing module.
\end{cor}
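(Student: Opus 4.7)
The plan is to apply the preceding theorem directly. Unpacking the semidualizing hypothesis, the homothety quasi-isomorphism $R\to\rhom_R(C,C)$ amounts to an isomorphism $R\cong\Hom_R(C,C)$ together with the vanishing $\Ext^{i}_R(C,C)=0$ for all $i>0$. The latter is exactly the hypothesis of the preceding theorem, and combined with the assumed finiteness $\qid_RC<\infty$, that theorem immediately delivers $\id_RC<\infty$.

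It then remains to identify $C$ with the dualizing module. Under the standard definition of a dualizing module as a finitely generated $R$-module satisfying the two semidualizing conditions together with finite injective dimension, this is now automatic and there is nothing further to prove. If instead one uses a definition that requires uniqueness, the argument proceeds as follows: since $C$ is a nonzero finitely generated $R$-module of finite injective dimension, Bass's theorem (as proved by Roberts) forces $R$ to be Cohen-Macaulay, so $R$ admits a dualizing module $\omega_R$; the classical structure theory of semidualizing modules (see e.g.\ Christensen's monograph \emph{Gorenstein Dimensions}) then identifies any semidualizing module of finite injective dimension with $\omega_R$, the endomorphism condition $R\cong\Hom_R(C,C)$ forcing indecomposability and hence a single summand.

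I do not anticipate a significant obstacle: all the substantive content is already encoded in the preceding theorem, and the second step is either tautological or a citation of a well-known classification. The only mild point of care is confirming that the quasi-isomorphism $R\to\rhom_R(C,C)$ really does give the pointwise vanishing $\Ext^{i}_R(C,C)=0$ needed to invoke the preceding theorem, but this is immediate from taking cohomology of the homothety.
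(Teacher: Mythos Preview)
Your proposal is correct and matches the paper's intended argument exactly: the corollary is stated without proof precisely because it follows immediately from the preceding theorem via the observation that the semidualizing condition gives $\Ext^{i>0}_R(C,C)=0$, whence $\id_RC<\infty$, and a semidualizing module of finite injective dimension is by definition dualizing. Your additional remarks about Cohen--Macaulayness and uniqueness are not needed for the paper's definition but do no harm.
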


\section{Quasi dimensions and duality}

In this section, we show that over a Gorenstein local ring, quasi-dimensions of a finitely generated module either both are finite or infinite. This result is well-known for projective and injective dimensions.

Let $\CM^n(R)$ be the category of finitely generated Cohen-Macaulay $R$-modules of dimension $n$.

\begin{thm}\label{duality}
Let $R$ be a Cohen-Macaulay local ring of dimension $d$ admitting a dualizing module $\omega_R$. Then the functor $$\Ext^{d-n}_R(-,\omega_R): \CM^n(R) \to \CM^n(R)$$ induces an equivalence between the categories of $n$-dimensional Cohen-Mcaulay modules of finite quasi-projective dimension and finite quasi-injective dimension. 
\end{thm}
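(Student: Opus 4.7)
My plan is to exploit the classical Cohen--Macaulay duality. Write $(-)^\vee:=\Ext^{d-n}_R(-,\omega_R)$; this is the familiar contravariant self-equivalence on $\CM^n(R)$ with $M\cong (M^\vee)^\vee$, and in derived language it is (a shift of) the restriction of $\mathbb{D}:=\rhom_R(-,\omega_R)$, an involutive equivalence on $D^b_{\mathrm{fg}}(R)$. The categorical equivalence on $\CM^n(R)$ is standard; the substance of the theorem is that $(-)^\vee$ interchanges the subcategory of modules with finite quasi-projective dimension and the subcategory of modules with finite quasi-injective dimension. Fix once and for all a bounded injective resolution of $\omega_R$, say $E$, which exists since $\id_R\omega_R=d$.

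For the forward implication, let $M\in\CM^n(R)$ with $\qpd_RM<\infty$, realized by a bounded quasi-projective resolution $P$ having $\H_q(P)\cong M^{\oplus a_q}$. The total complex $J$ of $\Hom_R(P,E)$ is a bounded complex of injective modules, and since $P$ is a bounded complex of projectives, $J$ represents $\mathbb{D}(P)=\rhom_R(P,\omega_R)$ in the derived category. The spectral sequence of Lemma~\ref{spseq} (applied with $N=\omega_R$) takes the form
\[
\E^{p,q}_2=\Ext^p_R(\H_q(P),\omega_R)\Longrightarrow \H_{-p-q}(J),
\]
whose $\E_2$-page is concentrated in the single column $p=d-n$, because a Cohen--Macaulay module of dimension $n$ satisfies $\Ext^p_R(-,\omega_R)=0$ for $p\neq d-n$ with $\Ext^{d-n}_R(M,\omega_R)=M^\vee$. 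Degeneration yields $\H_{-(d-n)-q}(J)\cong (M^\vee)^{\oplus a_q}$, exhibiting $J$ as a bounded quasi-injective resolution of $M^\vee$; hence $\qid_RM^\vee<\infty$.

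For the reverse implication, start with $M\in\CM^n(R)$ of $\qid_RM<\infty$ and a bounded quasi-injective resolution $I$. Its homologies, being copies of $M$, are finitely generated, so $I$ represents an object of $D^b_{\mathrm{fg}}(R)$ of finite injective dimension. I will invoke the classical Cohen--Macaulay/Grothendieck duality: for a Cohen--Macaulay local ring with dualizing module, $\mathbb{D}$ restricts to an involutive equivalence on $D^b_{\mathrm{fg}}(R)$ exchanging the subcategory of perfect complexes with the subcategory of complexes of finite injective dimension (see e.g.\ \cite[Theorem~8.3.18]{CFH} together with the dualizing-complex theory; equivalently, using $\mathbb{D}(k)\simeq k[-d]$, duality converts $\Tor^R_i(-,k)$ into $\Ext^i_R(k,-)$ on the $\mathbb{D}$-image, so finite projective dimension becomes finite injective dimension and vice versa). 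Thus $\rhom_R(I,\omega_R)$ is quasi-isomorphic to a bounded complex $P$ of finitely generated projective modules. Running the same spectral sequence with $\H_q(I)\cong M^{\oplus b_q}$ computes $\H_i(\rhom_R(I,\omega_R))$ as finite direct sums of copies of $M^\vee$, so $P$ is a bounded quasi-projective resolution of $M^\vee$, giving $\qpd_RM^\vee<\infty$.

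The principal obstacle, as indicated, is the invocation in the reverse implication of the Cohen--Macaulay duality statement that $\mathbb{D}$ exchanges perfect complexes with complexes of finite injective dimension on $D^b_{\mathrm{fg}}(R)$. Merely knowing that $\mathbb{D}(\mathrm{perfect})\subseteq\mathrm{fid}$ (which falls out of the forward step) together with $\mathbb{D}^2\simeq\id$ is not formally sufficient to conclude $\mathbb{D}(\mathrm{fid})\subseteq\mathrm{perfect}$, so the genuine Tor-versus-Ext duality against $k$ must be cited. Everything else is spectral-sequence bookkeeping on top of the standard Ext-vanishing characterization of Cohen--Macaulay modules against $\omega_R$.
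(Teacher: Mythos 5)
Your argument is correct, and it reaches the theorem by a route that differs from the paper's in two places. First, you never reduce to the maximal Cohen--Macaulay case: you work with an $n$-dimensional Cohen--Macaulay module directly and let the concentration of $\Ext^{*}_R(M,\omega_R)$ in degree $d-n$ collapse the hypercohomology spectral sequence, whereas the paper first invokes Proposition \ref{C51} to kill a regular sequence in $\ann_R(M)$ and assume $M$ is maximal Cohen--Macaulay, so that only $\Hom_R(-,\omega_R)$ appears. Second, and more substantively, in the direction $\qid_RM<\infty \Rightarrow \qpd_R\Ext^{d-n}_R(M,\omega_R)<\infty$ the paper does not appeal to the classical ``finite injective dimension $\leftrightarrow$ perfect'' duality: it observes that $\Hom_R(I,D(R))$ is a bounded complex of \emph{flat} modules (Hom of an injective into an injective is flat), hence of finite flat dimension in the derived category, and then upgrades finite flat dimension to finite projective dimension for a complex with finitely generated homology via \cite[Theorem 8.3.19]{CFH}, obtaining a bounded semi-projective replacement. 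You instead cite Grothendieck/Cohen--Macaulay duality on $\mathcal{D}^b_{\mathrm{fg}}(R)$, and you correctly flag that the needed inclusion (finite injective dimension implies the dual is perfect) is genuine input not deducible from involutivity plus the easy inclusion; your sketch via $\operatorname{RHom}_R(k,-)$ versus $k\otimes^{\mathbf L}_R-$ is the standard proof of that classical fact, so the appeal is legitimate even if the precise reference is not the one the paper would use. The trade-off: the paper's flatness argument is more elementary and self-contained given \cite{CFH}, while your version avoids the reduction step and handles the degree shift $d-n$ uniformly; your spectral sequence for $\Hom_R(I,E)$ with $I$ a bounded complex of injectives is not literally Lemma \ref{spseq}, but it is the same bounded double-complex spectral sequence the paper itself uses implicitly in this direction, so nothing is missing.
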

\begin{proof}
Let $M$ be a finitely generated Cohen-Macaulay $R$-module of dimension $n$. Since $M$ is Cohen-Macaulay, by using Proposition \ref{C51}, we may choose an appropriate regular sequence contained in $\ann_R(M)$ and assume that $M$ is maximal Cohen-Macaulay.

If $\qpd_RM<\infty$, then there exists a bounded quasi-free resolution
$$F=(0\to F_l \to \cdots \to F_1 \to F_0 \to 0)$$
of $M$ such that $F_i$ is a finitely generated free module for each $i$; see \cite[Proposition 3.4]{GJT}. Let $D(R)$ be a dualizing complex of $R$. Then $\Hom_R(F,D(R))$ is a bounded complex of injective modules and by Lemma \ref{spseq} and \cite[Theorem 3.3.10]{BH} we have $\H_i(\Hom_R(F,D(R)))\cong  \Hom_R(\H_{-i}(F), \omega_R)$. This shows that $\qid_R\Hom_R(M,\omega_R)<\infty$. 

Next, assume $\qid_RM<\infty$ and let $I$ be a bounded quasi-injective resolution of $M$. Then $\Hom_R(I,D(R))$ is a bounded complex of flat $R$-modules whose nonzero homologies are isomorphic to finite direct sums of copies of $\Hom_R(M,\omega_R)$. By applying \cite[Theorem 8.3.19]{CFH}, we get $\pd_R\Hom(I,D(R))<\infty$ in the derived category of $R$. Thus there exists a bounded semi-projective complex $P$ and a quasi-isomorphism $P\overset{\simeq}\longrightarrow \Hom_R(I,D(R))$. This shows that $\qpd_R\Hom_R(M,\omega_R)<\infty$.
\end{proof}

\begin{prop}\label{P52}
Let $R$ be a ring and let $0\to N \to X \to M \to 0$ be an exact sequence of $R$-modules.
\begin{enumerate}[\rm(1)]
    \item If $\qpd_RM<\infty$ and $\pd_RX<\infty$, then $\qpd_RN<\infty$.
    \item If $\qid_RN<\infty$ and $\id_RX<\infty$, then $\qid_RM<\infty$.
\end{enumerate}
\end{prop}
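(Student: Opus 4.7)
My plan for (1) is to induct on $\pd_R X$, using the quasi-projective analog of Proposition~\ref{Syz}(3) from \cite[Proposition 3.3]{GJT} (``for $0 \to N \to P \to M \to 0$ with $P$ projective, $\qpd_R N \le \qpd_R M$''). The base case $\pd_R X = 0$ (with $X$ projective) is then immediate from that analog. For the inductive step with $\pd_R X = d > 0$, I would take a projective cover $P \twoheadrightarrow X$ whose kernel $Y$ satisfies $\pd_R Y = d-1$. The composition $P \twoheadrightarrow X \twoheadrightarrow M$ has kernel $K$, and a diagram chase produces two short exact sequences: $0 \to K \to P \to M \to 0$ (from which $\qpd_R K \le \qpd_R M < \infty$ via the base case applied to this sequence) and $0 \to Y \to K \to N \to 0$.

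For (2), I would proceed dually by induction on $\id_R X$. The base case $\id_R X = 0$ is Proposition~\ref{Syz}(3). For the inductive step with $\id_R X = d > 0$, I would embed $X \hookrightarrow E_0$ in an injective envelope and set $X_1 := E_0/X$ (so $\id_R X_1 = d-1$). Combining the inclusion $N \hookrightarrow X \hookrightarrow E_0$ with a horseshoe-style construction yields $0 \to N \to E_0 \to E_0/N \to 0$ (from which $\qid_R(E_0/N) \le \qid_R N < \infty$ by Proposition~\ref{Syz}(3)) and $0 \to M \to E_0/N \to X_1 \to 0$.

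The main obstacle is that neither inductive reduction produces a sequence matching the hypotheses of the original statement: in (1) we are left with $0 \to Y \to K \to N \to 0$ in which the \emph{left} term has finite $\pd$ and the \emph{middle} term has finite $\qpd$, and we want to conclude finiteness of $\qpd$ for the \emph{right} term; in (2) we are similarly left with a sequence whose middle term has finite $\qid$ and right term has finite $\id$. Thus the induction actually forces the proof of a package of parallel statements (one for each role-assignment in a short exact sequence) that must be handled simultaneously. The cleaner alternative, in the spirit of the proof of Proposition~\ref{C51}(2) and \cite[Example 3.2]{DGI}, is to verify that the class of complexes in $\D(R)$ of finite quasi-projective (respectively, quasi-injective) dimension forms a thick subcategory; applied to the distinguished triangle $N \to X \to M \to N[1]$, the two-out-of-three property for thick subcategories then closes both (1) and (2) in one stroke.
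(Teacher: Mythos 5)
Your induction, as you yourself note, does not close: the reduction replaces the given sequence by one in which the finite-$\pd$ (resp.\ finite-$\id$) term and the finite-quasi-dimension term sit in different slots, so nothing is actually proved. The real problem is the fallback you offer in its place. The classes of modules (or complexes) of finite quasi-projective or quasi-injective dimension do \emph{not} form thick subcategories of $\D(R)$, and no two-out-of-three principle is available; \cite[Example 3.2]{DGI} concerns honest injective dimension, which is what the proof of Proposition \ref{C51}(2) uses. A counterexample lives inside this paper: let $(R,\m,k)$ be local and non-Gorenstein with $\m^2=0$, e.g.\ $R=k[x,y]/(x,y)^2$. In the triangle coming from $0\to\m\to R\to k\to0$, both $\m\cong k^{\oplus n}$ and $k$ have finite quasi-injective dimension (see the remark after Corollary \ref{35}), yet $\qid_RR=\infty$, since $\pd_RR=0$ together with $\qid_RR<\infty$ would make $R$ Gorenstein by Corollary \ref{C42}. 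The obstruction is intrinsic: a quasi-resolution must have \emph{all} of its homology isomorphic to direct sums of copies of one fixed module, and a cone on a map of quasi-resolutions of two different modules generically has mixed homology. In particular the instance you would need from "thickness" --- the proposition with $\id_RX$ weakened to $\qid_RX$ --- is precisely the kind of statement that cannot be assumed; the hypothesis $\id_RX<\infty$ (resp.\ $\pd_RX<\infty$) is in the statement because the proof genuinely uses it.

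For comparison, the paper proves (2) by a direct mapping-cone construction tailored to this asymmetry. Take a bounded quasi-injective resolution $I$ of $N$ with $\H_i(I)\cong N^{\oplus n_i}$ and a bounded injective resolution $E$ of $X$ (here $\id_RX<\infty$ enters). Form the bounded complex of injectives $J$ obtained by placing a copy of $E^{\oplus n_i}$ so that $\H_i(J)\cong X^{\oplus n_i}$, and use injectivity of the $E_j$ to lift the inclusions $N^{\oplus n_i}\hookrightarrow X^{\oplus n_i}$ to a chain map $\beta\colon I\to J$. In the homology long exact sequence of $\Cone(\beta)$ the connecting maps are exactly these inclusions, hence injective, so $\H_i(\Cone(\beta))\cong X^{\oplus n_i}/N^{\oplus n_i}\cong M^{\oplus n_i}$; as $\Cone(\beta)$ is a bounded complex of injectives, $\qid_RM<\infty$. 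Because $E$ is an honest resolution, with homology equal to $X$ concentrated in one degree, the cone's homology consists purely of copies of $M$; with only a quasi-injective resolution of $X$ this bookkeeping would break down. Statement (1) follows from the dual construction. To repair your write-up, drop the thickness shortcut and carry out this cone argument (or its dual) directly.
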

\begin{proof}
We only prove (2), and a dual argument applies to (1). Assume $\qid_RN<\infty$ and let $$I=(0\to I_0 \to I_{-1} \to \cdots \to I_l \to 0)$$ be a bounded quasi-injective resolution of $N$ such that $\H_i(I)\cong \oplus^{n_i}N$.
Let $(E, \partial)$ be an injective resolution of $X$. Let $(J_{ij},d_{ij},d'_{ij})$ be a double complex such that $J_{ij}=\oplus^{n_i}E_j$, $d_{ij}=\partial_j^{n_i}:J_{ij}\to J_{ij-1}$, and $d'_{ij}:J_{ij}\to J_{i-1j}$ is the zero map. By using the notation in Lemma \ref{L31}, for each $i$ the diagram 
$$
\begin{CD}
0 @>>> Z_i  @>>> I_i\\
@. @VVV \\
@. \H_i(I)\\
@. @V h_i VV \\
@. \oplus^{n_i}E_0
\end{CD}
$$ induces a map $\alpha_i:I_i \to \oplus^{n_i}E_0$ such that the completed diagram is commutative, where $h_i$ is the composition of the incusions $\H_i(I) \hookrightarrow \oplus^{n_i}X \hookrightarrow \oplus^{n_i}E_0 $. Let $J$ be the total complex of $(J_{ij},d_{ij},d'_{ij})$. Then one checks that $\H_i(J)\cong X^{n_i}$ and  $\alpha_i$ induces a map of complexes $\beta:I\to J$. Then the exact sequence $$0\to J \to \Cone(\beta) \to \Sigma I \to 0$$ induces an exact sequence 
$$\cdots \to \H_i(J)\to \H_i(\Cone(\beta)) \to \H_{i-1}(I) \to \H_{i-1}(J) \to \cdots,$$
where the connecting homomorphism is the inclusion $\H_i(I) \hookrightarrow \oplus^{n_i}X$. Therefore we have an exact sequence $0\to \H_i(I)\to \H_i(J) \to \H_i(\Cone(\beta)) \to 0$ which shows that $\H_i(\Cone(\beta))\cong \oplus^{n_i}M$. Since $\Cone(\beta)$ is a bounded complex of injective modules, we have $\qid_RM<\infty$.
\end{proof}

Now we  prove the main result of this section.

\begin{thm}\label{main5}
Let $(R,\m)$ be a Gorenstein local ring and let $M$ be a finitely generated $R$-module. Then $\qid_RM<\infty$ if and only if $\qpd_RM<\infty$
\end{thm}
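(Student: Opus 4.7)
The plan is to translate between bounded complexes of projective and of injective $R$-modules in the derived category $\D(R)$, using two standard facts about a Gorenstein local ring $R$ of Krull dimension $d$: every projective $R$-module has injective dimension at most $d$, and every injective $R$-module has flat dimension at most $d$.

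For the implication $\qpd_R M < \infty \Rightarrow \qid_R M < \infty$, I would start from a bounded quasi-projective resolution $P$ of $M$. Each term of $P$ then has finite injective dimension, so by the thickness of the subcategory of complexes of finite injective dimension in $\D(R)$ (\cite[Example 3.2]{DGI}, as already invoked in the proof of Proposition \ref{C51}(2)), one gets $\id_{\D(R)} P < \infty$. A bounded semi-injective replacement (\cite[Theorem 8.2.8]{CFH}) provides a quasi-isomorphism $P \overset{\simeq}\longrightarrow I$ with $I$ a bounded complex of injective $R$-modules. Since quasi-isomorphisms preserve homology, $\H_i(I) \cong \H_i(P) \cong M^{\oplus a_i}$ for all $i$, and thus $I$ witnesses $\qid_R M < \infty$.

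The reverse implication proceeds by the mirror argument, with an additional step to pass from finite flat dimension to finite projective dimension. Starting from a bounded quasi-injective resolution $I$ of $M$, each term of $I$ has finite flat dimension over $R$, so the analogous thickness statement gives $\fd_{\D(R)} I < \infty$. Because the homologies of $I$ are finitely generated ($\H_i(I) \cong M^{\oplus b_i}$), \cite[Theorem 8.3.19]{CFH} upgrades this to $\pd_{\D(R)} I < \infty$; this is exactly the step used in the proof of Theorem \ref{duality}. A bounded semi-projective replacement then provides a bounded complex $P$ of projective $R$-modules quasi-isomorphic to $I$, and the preservation of homology makes $P$ into a quasi-projective resolution, establishing $\qpd_R M < \infty$.

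The derived-category machinery here is essentially the same as was already deployed in Proposition \ref{C51}(2) and Theorem \ref{duality}, so the substantive obstacle is really the two homological bounds specific to Gorenstein rings: that every projective has $\id \le d$ (an immediate consequence of $\id_R R = d$) and, more delicately, that every injective has $\fd \le d$. The latter is classical — it can be read off from the structure theorem for injective modules together with $\fd_R \E(R/\p) \le \height \p$ over a Gorenstein ring — and should be recorded at the start of the proof so that the thickness arguments in both directions go through uniformly.
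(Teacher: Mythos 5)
Your argument is correct, but it takes a genuinely different route from the paper. The paper first reduces to the maximal Cohen--Macaulay case (via syzygies and Proposition \ref{P52}(1) in one direction, and the Auslander--Buchweitz approximation $0\to M\to X\to N\to 0$ together with Proposition \ref{P52}(2) in the other), and then settles the MCM case by combining the duality equivalence of Theorem \ref{duality} with \cite[Proposition 6.14]{GJT} applied to the totally reflexive module $M$ and its dual $\Hom_R(M,R)$. You instead stay with the given quasi-resolution of $M$ itself and never dualize: boundedness of the complex plus the Gorenstein facts that projectives have finite injective dimension and injectives have finite flat dimension feed into exactly the thickness/bounded-replacement machinery (\cite[Example 3.2]{DGI}, \cite[Theorems 8.2.8 and 8.3.19]{CFH}) that the paper already uses in Proposition \ref{C51}(2) and inside the proof of Theorem \ref{duality}, and a quasi-isomorphism preserves the homology condition defining quasi-resolutions. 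What your route buys is brevity and independence from Theorem \ref{duality}, from \cite[Proposition 6.14]{GJT}, and from MCM approximations; what it costs is reliance on the classical statement that every injective module over a Gorenstein ring has flat dimension at most $d$ (via the structure theorem and $\fd_R\E(R/\p)\le\height\p$), which you rightly flag and should record explicitly, along with the small remark that for the implication from $\qpd_RM<\infty$ you may either invoke the Noetherian fact that direct sums of injectives are injective (so arbitrary projectives have $\id\le d$) or simply take the quasi-projective resolution to consist of finitely generated free modules by \cite[Proposition 3.4]{GJT}. The paper's longer route, on the other hand, produces the explicit correspondence $M\leftrightarrow\Hom_R(M,\omega_R)$ of Theorem \ref{duality}, which is of independent interest beyond Theorem \ref{main5}.
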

\begin{proof}
First, we assume $M$ is maximal Cohen-Macaulay. Then by Theorem \ref{duality}, $\qid_RM<\infty$ if and only if $\qpd_R\Hom_R(M,R) < \infty$. Since $M$ is totally reflexive, by \cite[Proposition 6.14]{GJT} we have $\qpd_R \Hom_R(M,R)<\infty$ if and only if $\qpd_RM<\infty$. 

Next, assume $M$ is any module. If $\qpd_RM<\infty$, then by using Proposition \ref{P52}(1), we have $\qpd_R\Omega^nM<\infty$ for all $n\geq 0$. By choosing $n$ large enough, we have $\Omega^nM$ is maximal Cohen-Macaulay and $\qpd_R\Omega^nM<\infty$. Therefore $\qid_R\Omega^nM<\infty$. Since $\id_RR<\infty$, by Proposition \ref{P52}(2), we get $\qid_RM<\infty$.

Now assume $\qid_RM<\infty$. By \cite[Theorem A]{ABU}, there exists an exact sequence $$0\to M \to X \to N \to 0,$$ where $\id_RX<\infty$ and $N$ is maximal Cohen-Mcaulay. Then by Proposition \ref{P52}(2), we have $\qid_RN<\infty$ and so $\qpd_RN<\infty$. Since $\id_RX<\infty$ and $R$ is Gorenstein, $\pd_R X<\infty$. Therefore $\qpd_RM<\infty$ by Proposition \ref{P52}(1).
\end{proof}

\begin{cor}
Let $R$ be a Gorenstein local ring and let $M$ and $N$ be finitely generated $R$-modules with $\qid_RM<\infty$. Then one has $\Ext^{\gg 0}_R(M,N)=0$ if and only if $\Ext^{\gg 0}_R(N,M)=0$. 
\end{cor}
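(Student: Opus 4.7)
The plan is to combine Theorem \ref{main5} with the spectral-sequence machinery of Lemma \ref{spseq} and the Gorenstein duality of Theorem \ref{duality}, attacking the symmetry from both sides simultaneously.

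By Theorem \ref{main5}, the hypothesis $\qid_R M < \infty$ over the Gorenstein ring $R$ gives $\qpd_R M < \infty$ as well, so $M$ admits both a bounded quasi-projective resolution $F$ and a bounded quasi-injective resolution $I$. For one direction, if $\Ext^{\gg 0}_R(N,M)=0$, then Corollary \ref{C37} applied with $\qid_R M < \infty$ yields $\Ext^i_R(N,M)=0$ for all $i > \depth R$. For the converse direction, the analog of Corollary \ref{C37} for quasi-projective dimension---established via the first spectral sequence of Lemma \ref{spseq} applied to $F$, in the spirit of Proposition \ref{main1}(2)---gives: if $\Ext^{\gg 0}_R(M,N)=0$, then $\Ext^i_R(M,N)=0$ for $i > \depth R$.

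To upgrade these independent vanishing ranges to the biconditional, I would first reduce to the case that $M$ is maximal Cohen--Macaulay by iteratively applying Proposition \ref{P52}(1) to replace $M$ by a sufficiently high syzygy $\Omega^n M$; this preserves finiteness of both quasi-dimensions and preserves eventual Ext-vanishing in either argument. Then, by Theorem \ref{duality} combined with Theorem \ref{main5}, the dual $M^{\ast} = \Hom_R(M,R)$ is MCM with finite $\qpd$ and finite $\qid$. Applying the standard derived Gorenstein identity $\rhom_R(M,N) \simeq \rhom_R(\rhom_R(N,R), \rhom_R(M,R))$, together with the MCM reflexivity $M \cong M^{\ast\ast}$, transfers eventual vanishing of $\Ext^i_R(M,N)$ into eventual vanishing of $\Ext^i_R(N,M)$ and vice versa.

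The main obstacle is establishing the symmetry rigorously: simply noting that both conditions satisfy a common $\depth R$-bound is not enough, since the two vanishing-range statements are individually weak. The actual biconditional requires invoking Gorenstein duality precisely, in the spirit of the Avramov--Buchweitz symmetry theorem for modules of finite complete intersection dimension over complete intersection rings.
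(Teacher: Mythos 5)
Your opening move matches the paper: by Theorem \ref{main5}, $\qid_RM<\infty$ over the Gorenstein ring gives $\qpd_RM<\infty$. But the paper then finishes by citing the Ext-symmetry theorem for modules of finite quasi-projective dimension, \cite[Theorem 6.16]{GJT}, and it is exactly this ingredient that your proposal fails to supply. The bounds from Corollary \ref{C37} and its quasi-projective analogue only shrink the vanishing range once eventual vanishing is known; as you acknowledge, they do not relate $\Ext_R(M,N)$ to $\Ext_R(N,M)$.

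The step you offer in their place has a genuine gap. The identity $\rhom_R(M,N)\simeq\rhom_R(\rhom_R(N,R),\rhom_R(M,R))$ is valid over a Gorenstein ring, but it relates $\rhom_R(M,N)$ to $\rhom$ of the \emph{duals} in swapped order, not to $\rhom_R(N,M)$. Writing $(-)^{\dagger}=\rhom_R(-,R)$, the assignment $(X,Y)\mapsto(Y^{\dagger},X^{\dagger})$ is an involution on pairs (up to reflexivity), so applying the identity again just returns $\rhom_R(M,N)$; at no point does $\rhom_R(N,M)$ appear, and reflexivity of the maximal Cohen--Macaulay module $M$ does not remove the dual from the $N$-slot (indeed $N^{\dagger}$ is in general an unbounded-Ext-carrying complex, not $N$). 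A decisive sanity check: if this formal duality argument worked, it would prove that $\Ext^{\gg0}_R(M,N)=0$ iff $\Ext^{\gg0}_R(N,M)=0$ for \emph{all} pairs of finitely generated modules over any Gorenstein local ring, with no quasi-dimension hypothesis at all; this is false, as there are artinian Gorenstein rings admitting modules with $\Ext^{\gg0}_R(M,N)=0$ but $\Ext^{\gg0}_R(N,M)\neq0$ (Jorgensen--\c{S}ega). So the finiteness of $\qpd_RM$ must enter the symmetry argument in an essential way, as it does in \cite[Theorem 6.16]{GJT} (in the spirit of Avramov--Buchweitz symmetry over complete intersections), and your proof needs either to cite that theorem or to reconstruct its argument; your syzygy reduction to the maximal Cohen--Macaulay case is fine over a Gorenstein ring but does not bridge this gap.
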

\begin{proof}
It follows from Theorem \ref{main5} and \cite[Theorem 6.16]{GJT}.
\end{proof}

\begin{ac}
The author gratefully acknowledges the insightful conversation with Majid Rahro Zargar during the manuscript preparation. We thank Amir Mafi for bringing \cite{Saz} to our attention. We extend our appreciation to the anonymous referee for providing valuable comments and suggestions, which greatly contributed to the improvement of this paper. 
\end{ac}

\end{document}